\newtheorem{theorem}{Theorem}[section]
\newtheorem{claim}{}[theorem]
\newtheorem{lemma}[theorem]{Lemma}
\newtheorem{conjecture}[theorem]{Conjecture}
\theoremstyle{definition}
\newtheorem{remark}[theorem]{Remark}
\newcommand{\bF}{\mathbb F}
\newcommand{\cC}{\mathcal{C}}
\newcommand{\cF}{\mathcal{F}}
\newcommand{\cH}{\mathcal{H}}
\newcommand{\cP}{\mathcal{P}}
\newcommand{\vs}[1]{\left[{#1}\right]}
\DeclareMathOperator{\cl}{cl}
\DeclareMathOperator{\PG}{PG}
\DeclareMathOperator{\PGS}{PGS}
\DeclareMathOperator{\AG}{AG}
\DeclareMathOperator{\Stab}{Stab}
\newcommand{\del}{ \backslash  }
\numberwithin{subcase}{case}
\numberwithin{subsubcase}{subcase}
\newenvironment{subproof}[1][\proofname]{%
  \begin{proof}[Subproof:]%
}{%
  \end{proof}%
}
\begin{document}
%\sloppy

\title{The smallest $I_5$-free and triangle-free binary matroids}
\author[Nelson]{Peter Nelson}
\address{Department of Combinatorics and Optimization, University of Waterloo, Waterloo, Canada. Email address: {\tt 	apnelson@uwaterloo.ca}}
\author[Nomoto]{Kazuhiro Nomoto}
\address{Department of Combinatorics and Optimization, University of Waterloo, Waterloo, Canada. Email address: {\tt 	knomoto@uwaterloo.ca}}
\thanks{This work was supported by a discovery grant from the Natural Sciences and Engineering Research Council of Canada and an Early Researcher Award from the government of Ontario}

\subjclass{05B35}
\keywords{matroids}
\date{\today}
\begin{abstract}
	We determine the smallest simple triangle-free binary matroids that have no five-element independent flat. This solves a special case of a conjecture of Nelson and Norin. 
\end{abstract}

\maketitle

\section{Introduction} This paper determines the smallest simple binary matroids that have no five-element independent flat and no triangle.

This problem is motivated by the following theorem by Nelson and Norin [\ref{nnorin}], which considers a similar extremal problem for matroids with no independent flat. Note that their theorem is valid not only for binary matroids, but also for general matroids. For integers $r \geq 1$ and $t \geq 1$, $M_{r,t}$ denotes the matroid that is the direct sum of $t$ (possibly empty) binary projective geometries whose ranks sum to $r$ and pairwise differ by at most $1$. 

\begin{theorem}[{[\ref{nnorin}], Theorem 1.1}]\label{noclaw}
Let $r, t \geq 1$ be integers. If $M$ is a simple rank-$r$ matroid with no independent flat of rank $(t+1)$, then $|M| \geq |M_{r,t}|$. If equality holds and $r \geq 2t$, then $M \cong M_{r,t}$. 
\end{theorem}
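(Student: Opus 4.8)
The plan is to recast the hypothesis in terms of the largest independent flat, prove a lemma about contracting a maximum such flat, and then induct on $r$; the equality statement requires a reconstruction argument that I expect to be the main obstacle.

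\textbf{Reformulation.} I would first observe that if $F$ is an independent flat and $S\subseteq F$ then $\cl_M(S)\subseteq\cl_M(F)=F$ and $M|F$ is free, so $S$ is itself a flat of $M$; hence $M$ has no independent flat of rank $t+1$ if and only if every independent flat of $M$ has rank at most $t$. Write $\operatorname{fr}(M)$ for the maximum rank of an independent flat of $M$ (so $\operatorname{fr}(M)\ge1$ for nonempty simple $M$). A short convexity argument --- splitting the largest component rank of $M_{n,k}$ in two and using $2^a+2^b\le 2^{a+b}$ for $a,b\ge1$ shows $|M_{n,k}|\ge|M_{n,k+1}|+1$ whenever $n>k$ --- gives that $k\mapsto|M_{n,k}|$ is strictly decreasing on $\{1,\dots,n\}$. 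So it suffices to prove $|M|\ge|M_{r,\operatorname{fr}(M)}|$, with equality forcing $M\cong M_{r,\operatorname{fr}(M)}$ when $r\ge 2\operatorname{fr}(M)$.

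\textbf{Key Lemma.} I would establish: if $F$ is a maximum independent flat of $M$ and $k=\operatorname{fr}(M)=|F|$, then (a) every parallel class of $M/F$ has size at least $2$, so $|M|-k=|M/F|\ge 2\,|\si(M/F)|$, and (b) $\operatorname{fr}(\si(M/F))\le k$. Part (a) is easy: for $p\in E(M)\setminus F$ the flat $\cl_M(F\cup p)$ has rank $k+1>\operatorname{fr}(M)$, hence is dependent and properly contains $F\cup p$, and any element of $\cl_M(F\cup p)\setminus(F\cup p)$ is parallel to $p$ in $M/F$. For (b): since independent flats form a complex it is enough to rule out an independent flat $G$ of $\si(M/F)$ of rank exactly $k+1$; picking a representative $p_j$ of each of the $k+1$ rank-one flats spanning $G$ and setting $I=\{p_1,\dots,p_{k+1}\}$ (an independent set with $F\cup I$ independent of rank $2k+1$), either $\cl_M(I)=I$, an independent flat of rank $k+1$ contradicting $\operatorname{fr}(M)=k$, or there is $q\in\cl_M(I)\setminus I$. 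Submodularity applied to $\cl_M(I)$ and $F$ gives $r_M(\cl_M(I)\cap F)=0$, so $q\notin F$; then the parallel class of $q$ in $M/F$ is a rank-one flat inside $G$, hence the class of some $p_j$, so $q\in\cl_M(F\cup p_j)$; applying submodularity to $\cl_M(I)$ and $\cl_M(F\cup p_j)$ then forces $r_M(\cl_M(I)\cap\cl_M(F\cup p_j))\le1$, yet this flat contains the distinct points $p_j,q$ --- impossible in the simple matroid $M$.

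\textbf{Lower bound.} I would then induct on $r$. If $M$ is free, $\operatorname{fr}(M)=r\le t$ and $|M|=r=|M_{r,t}|$. Otherwise $k=\operatorname{fr}(M)<r$; with $F$ as above, $\si(M/F)$ is simple of rank $r-k<r$ with $\operatorname{fr}(\si(M/F))\le k$, so by induction and monotonicity $|\si(M/F)|\ge|M_{r-k,k}|$, whence
\[ |M|=k+|M/F|\ \ge\ k+2\,|M_{r-k,k}|\ =\ |M_{r,k}|\ \ge\ |M_{r,t}|, \]
using the identity $|M_{n,k}|=k+2\,|M_{n-k,k}|$ for $n\ge k$ (the component ranks of $M_{n-k,k}$ are those of $M_{n,k}$ each decreased by one) and, at the end, $k\le t$ with strict monotonicity.

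\textbf{Equality case, and the main obstacle.} Assume $|M|=|M_{r,t}|$ with $r\ge 2t$. Strict monotonicity forces $\operatorname{fr}(M)=t$, and tracing equality through the lemma forces $|F|=t$, every parallel class of $M/F$ of size exactly $2$, $\operatorname{fr}(\si(M/F))=t$, and $|\si(M/F)|=|M_{r-t,t}|$. When $r\ge 3t$, the inductive hypothesis applies to $\si(M/F)$ and yields $\si(M/F)\cong M_{r-t,t}$. To recover $M$, note that for each size-two parallel class $\{p,p'\}$ of $M/F$ the line $\cl_M(\{p,p'\})$ has rank $2$ while $\cl_M(F\cup\{p,p'\})$ has rank $t+1$, so submodularity forces $\cl_M(\{p,p'\})$ to meet $F$ in exactly one point; using this and the description of $M/F$ as $M_{r-t,t}$ with each point doubled, one would argue that these one-point incidences are constant on each connected component of $M/F$, so that $M$ is a direct sum of $t$ flats, each of which --- having the minimum number of points and no independent flat of rank $2$ --- is a binary projective geometry by the $t=1$ case (and $r\ge 2t$ is precisely what makes each component have rank $\ge2$, which the $t=1$ uniqueness requires). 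The genuinely hard part is exactly this last step: proving that the incidences with $F$ really are constant on each component --- i.e.\ that $M$ splits as a direct sum rather than being glued along $F$ --- and handling the base cases $2t\le r<3t$, where the induction on $r$ cannot be iterated and $\si(M/F)$ need not even be isomorphic to $M_{r-t,t}$. I expect both to require a direct analysis of the rank-$(t+2)$ flats of $M$ lying over $F$ (the doubled triangles of $M/F$) and of small binary matroids with prescribed $\operatorname{fr}$.
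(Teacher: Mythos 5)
A preliminary remark: this paper does not prove Theorem~\ref{noclaw} at all --- it is quoted from Nelson and Norin [\ref{nnorin}] and used as a black box (in fact only through its consequence Theorem~\ref{I3free_extremal}). So there is no in-paper proof to compare you against, and I can only judge your argument on its own terms. On those terms, the inequality half of your proposal looks correct and complete: subsets of an independent flat are flats, so the hypothesis is equivalent to a bound on the maximum rank $k$ of an independent flat; the convexity argument does give that $k\mapsto|M_{n,k}|$ is strictly decreasing for $k<n$; in your Key Lemma, part (a) is immediate and in part (b) the two submodularity applications really do yield first $\cl_M(I)\cap F=\varnothing$ and then the contradiction from the two distinct points $p_j,q$ lying in a rank-$\le 1$ intersection; and the identity $|M_{n,k}|=k+2|M_{n-k,k}|$ closes the induction $|M|\ge k+2|\si(M/F)|\ge k+2|M_{r-k,k}|=|M_{r,k}|\ge|M_{r,t}|$.

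The genuine gap is the equality case, which is half of the statement. The specific step you lean on is false as written: submodularity applied to $L=\cl_M(\{p,p'\})$ and $F$ gives only $r(L\cap F)\le 1$; it cannot force $L$ to meet $F$ at all. In a general matroid a parallel pair of $M/F$ need not lie on a three-point line through $F$: already in $U_{3,4}$ with $F$ a two-element independent flat, the remaining two elements are parallel in $M/F$ while their line is disjoint from $F$, and nothing in your hypotheses rules this out, since a two-point line is itself an independent flat of rank $2$, which is permitted once $t\ge 2$. So the incidence data you want to propagate (``each doubled point of $M/F$ attaches to a unique point of $F$'') need not exist, and the whole reconstruction of $M$ as a direct sum --- the step you yourself identify as the hard one --- is not established; it would have to come from exploiting extremality further (the exact sizes of parallel classes, the structure of rank-$(t+2)$ flats over $F$), not from submodularity alone. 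Separately, the range $2t\le r<3t$ is not a removable ``base case'': there $r-t<2t$, so the inductive uniqueness statement tells you nothing about $\si(M/F)$, yet these values of $r$ lie squarely within the scope of the claim $M\cong M_{r,t}$. As it stands you have a clean, self-contained proof of the bound, but not of the characterization of equality.
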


In the same paper [\ref{nnorin}], they made the following natural conjecture, in which it is required that the matroids be triangle-free. Our main result is the case $t=2$ in the setting of simple binary matroids. 

\begin{conjecture}[{[\ref{nnorin}], Conjecture 1.3}]\label{bigconjecture}
Let $t,r$ be integers with $t \geq 1$ and $t | r$. If $M$ is a simple triangle-free matroid with no rank-$(2t+1)$ independent flat, then $|M| \geq t2^{r/t-1}$. 
\end{conjecture}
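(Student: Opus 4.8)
For an integer $k\ge 1$, let $\AG(k-1,2)$ denote the rank-$k$ binary affine geometry, that is, $\PG(k-1,2)$ with a hyperplane deleted, so that $|\AG(k-1,2)|=2^{k-1}$. Put $k=r/t$ and let $\cM=\AG(k-1,2)^{\oplus t}$, the direct sum of $t$ copies. Then $\cM$ is simple of rank $r$ with $|\cM|=t\,2^{r/t-1}$; it is triangle-free, since $\AG(k-1,2)$ has no three-point line and a direct sum of triangle-free matroids is triangle-free; and it has no rank-$(2t+1)$ independent flat, since a flat of a direct sum is a direct sum of flats of the summands, an independent flat is a direct sum of independent flats, and the largest independent flat of $\AG(k-1,2)$ has rank at most $2$ (the case $k=1$ being trivial, as $2^{j-1}=j$ forces $j\le 2$). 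Thus the conjectured bound is attained, and it remains to prove the lower bound $|M|\ge t\,2^{r/t-1}$; we may assume $k=r/t\ge 2$, since the bound $|M|\ge t=r$ is immediate for a simple rank-$r$ matroid when $k=1$.

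\textbf{The case $t=1$.} The crucial special case is the assertion that a simple triangle-free matroid of rank $r$ with no rank-$3$ independent flat has at least $2^{r-1}$ elements, with equality forcing it (for $r$ large) to be $\AG(r-1,2)$. Such a matroid has every line of size exactly $2$, while every rank-$3$ flat has at least $4$ elements, and a short argument shows that a rank-$3$ flat of size exactly $4$ must be a copy of $\AG(2,2)=U_{3,4}$, so on low-rank flats the matroid already looks affine. I would prove this by induction on $r$: choose a point $e$, pass to $\si(M/e)$, verify it is again triangle-free with no rank-$3$ independent flat, and deduce $|M|\ge 2\,|\si(M/e)|$ by bounding the sizes of the parallel classes of $M/e$ using the rank-$3$ structure around $e$.

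\textbf{The inductive step.} Assume $t\ge 2$ and that the statement holds for $t-1$. Given a simple triangle-free $M$ of rank $r$ with $t\mid r$ and no rank-$(2t+1)$ independent flat, the plan is to produce a flat $F$ of rank $r/t$ such that $M|F$ has no rank-$3$ independent flat and $\si(M/F)$ is triangle-free with no rank-$(2t-1)$ independent flat. Granting this, the case $t=1$ gives $|M|F|\ge 2^{r/t-1}$; the inductive hypothesis, applied in rank $r-r/t=(t-1)(r/t)$, which is divisible by $t-1$, gives $|\si(M/F)|\ge(t-1)\,2^{r/t-1}$; and since $F$ is a flat we get $|M|\ge|M|F|+|\si(M/F)|\ge t\,2^{r/t-1}$, as required. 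The natural candidate for $F$ is the closure of a carefully chosen rank-$(r/t)$ independent subset of a maximum independent flat of $M$ (which has rank at most $2t$): this should keep $M|F$ free of rank-$3$ independent flats while forcing a hypothetical rank-$(2t-1)$ independent flat of $\si(M/F)$ to combine with a rank-$2$ independent flat inside $F$ into a rank-$(2t+1)$ independent flat of $M$, a contradiction.

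\textbf{The main obstacle.} The genuinely hard point, hidden behind the requirements on $F$, is that contraction respects neither hypothesis in general: contracting a flat can create three-point lines, and ``$G/F$ is an independent flat of $M/F$'' need not make any associated set a flat of $M$, since elements of $F$ may lie in its $M$-closure. Getting around this appears to require strong structural control over near-extremal $M$ --- essentially, showing that a triangle-free matroid with no rank-$(2t+1)$ independent flat whose size is close to $t\,2^{r/t-1}$ must be binary and assembled from affine pieces, so that the minors one takes are themselves affine and behave predictably. This reduction to the binary, affine-structured case is where I expect the bulk of the work to lie, and it is precisely the step that the present paper carries out only in the case $t=2$.
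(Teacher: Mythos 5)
This statement is Conjecture~\ref{bigconjecture}: it is an open conjecture of Nelson and Norin, not a result proved in this paper. The paper establishes only the case $t=2$ for simple \emph{binary} matroids (the main theorem, via Sections 3--5), so there is no proof in the paper for you to match, and your text is not a proof either: your extremal-example verification is fine, but the two substantive steps are sketches whose key claims fail or are missing. Concretely, in your $t=1$ base case the induction via $\si(M/e)$ breaks at once, because single-element contraction does not preserve triangle-freeness: since every line of $M$ has two points while every rank-$3$ flat has at least four, any four-point plane through $e$ becomes a three-point line of $\si(M/e)$; for instance $\AG(2,2)\cong U_{3,4}$ contracted at a point is the triangle $U_{2,3}$. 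So the hypotheses you want to carry down are destroyed immediately (in the binary setting the paper instead invokes the structural fact, Lemma~\ref{i3triangle}, that $I_3$-free triangle-free binary matroids are affine geometries; no analogue is supplied here, and for general matroids even the $t=1$ case needs a genuine argument).

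The inductive step has the same problem in amplified form: you never construct the flat $F$, and both properties you require of it --- that $\si(M/F)$ be triangle-free and have no rank-$(2t-1)$ independent flat --- are exactly the statements that contraction does not give you for free, as you yourself acknowledge under ``the main obstacle.'' The counting $|M|\ge |M|F|+|\si(M/F)|$ is fine once such an $F$ exists, but producing it is where all the content lies. For comparison, the entire technical body of this paper is devoted to that kind of control in the special case $t=2$, binary: contracting a $C_5$ (Lemma~\ref{c5contraction}) or a maximum affine geometry (Lemmas~\ref{contraction_max_ag}, \ref{kites_not_extremal}, \ref{2tfreeness}) and classifying the obstructions (doubled kites, $2T$) that appear in the contraction, before an extremal count (Lemma~\ref{i3_2t_free_extremal}) can be applied. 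None of that is reproduced or replaced by your outline, so the proposal does not prove the conjecture; it restates the difficulty rather than resolving it.
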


The terminology in this paper is based on standard matroid theory, but it will be convenient to think of matroids as living inside an extrinsic ambient space, so we make the following minor modifications. A \emph{simple binary matroid} (which we simply refer to as just a \emph{matroid} in the rest of the paper) is a pair $M = (E,G)$, where $G$ is a finite binary projective geometry $\PG(n-1,2)$, and the \emph{ground set} $E$ is any subset of the points of $G$. For convenience, we write $G$ for the points of $G$. The \emph{dimension} of $M$ is the dimension of $G$ as a geometry. We write $|M|$ for $|E|$. Given two matroids $M_1 = (E_1, G_1)$, $M_2 = (E_2, G_2)$, we say that $M_1$ and $M_2$ are isomorphic if there exists an isomorphism $i: G_1 \rightarrow G_2$ such that $i(E_1) = E_2$. Having an explicit ambient space allows us to define the following. We say that a matroid $N$ is an \emph{induced restriction} (or $\emph{induced submatroid}$) of $M$ if there exists some subgeometry $F \subseteq G$ for which $N = (E \cap F, F)$. $M$ is \emph{$N$-free} if $M$ has no induced restriction isomorphic to $N$.

The ambient space of a matroid in this sense is analogous to the vertex set of a graph. In the same way that a graph might have isolated vertices, we do not require $E$ to span $G$. If $E$ spans $G$, then we say that $M$ is \emph{full-rank}; otherwise it is \emph{rank-deficient}. 

A \emph{triangle} is a two-dimensional subgeometry. Given $M=(E,G)$, if $E$ contains no triangle of $G$, then $M$ is \emph{triangle-free}. If $B$ is a basis of an $n$-dimensional projective geometry $G$, we write $I_n$ for the matroid $(B,G)$. When $n=3$, then we call $I_3$ a \emph{claw}. An \emph{affine geometry} of dimension $n$, denoted $\AG(n-1,2)$, is obtained by removing a maximal proper subgeometry from a projective geometry $\PG(n-1,2)$. 

We can now formulate and state our main result in this language. 

\begin{theorem}
Let $M=(E,G)$ be a full-rank, $I_5$-free, and triangle-free matroid of dimension $r$. Then $|E| \geq 2^{\lfloor r/2 \rfloor -1 } + 2^{\lceil r/2 \rceil -1 }$. When $r \geq 6$, if equality holds, then $M$ is the direct sum of two affine geometries of dimension $\lfloor r/2 \rfloor$ and $\lceil r/2 \rceil$ respectively.
\end{theorem}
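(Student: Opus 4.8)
Throughout, write $f(r) = 2^{\lfloor r/2\rfloor - 1} + 2^{\lceil r/2\rceil - 1}$ for the claimed lower bound. The plan is to induct on $r$. Rank-deficiency is stripped away immediately: if $E$ spans only a flat $F$ with $\dim F = r' < r$, then $(E, F)$ is a full-rank triangle-free $I_5$-free matroid of smaller dimension and $f$ is non-decreasing, so it suffices to treat full-rank $M$, and likewise the equality statement need only be verified in the full-rank case. The small values of $r$, where $f(r)$ is a small constant, are checked directly; this is feasible because in low rank a triangle-free $I_5$-free matroid is highly constrained — every rank-$5$ flat spanned by its points already contains a sixth point — and one can enumerate the possibilities or invoke the structure of the simpler classes that appear below.

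Two observations drive the inductive step. The first is local: if $B \subseteq E$ is an independent $5$-set then $I_5$-freeness forces some $p \in (E \cap \langle B\rangle) \setminus B$, and since $M$ is triangle-free the fundamental circuit of $p$ with respect to $B$ cannot have two or three elements, so it has $4$, $5$, or $6$; hence every rank-$5$ flat spanned by $E$ contains at least six points of $E$, and repeating this inside larger flats produces further lower bounds on $|E \cap F|$. The second is a decomposition principle: if $M = M_1 \oplus M_2$ with each $M_i$ nonempty, then $M$ is $I_5$-free if and only if $\ell(M_1) + \ell(M_2) \leq 4$, where $\ell(N)$ denotes the largest rank of an independent flat of $N$; since every triangle-free matroid with at least two elements has an induced $I_2$ (the third point on a line through two of its points would complete a triangle), in any direct-sum decomposition at most two summands have more than one element, and when two do they are both claw-free, while if exactly one does it is $I_4$-free.

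The decomposable case is then settled by reduction. Splitting $M$ into a direct sum and distributing the dimension, one applies to each part the sharp bound appropriate to its class — for a claw-free summand of dimension $k$, the bound $|N| \geq 2^{k-1}$, which is the binary instance of the $t = 1$ case of Conjecture~\ref{bigconjecture} (a triangle-free analogue of Theorem~\ref{noclaw}), and for a lone $I_4$-free summand the corresponding sharp estimate — and then checks that the resulting sum is minimised exactly when $M$ is the direct sum of two claw-free matroids of dimensions $\lfloor r/2\rfloor$ and $\lceil r/2\rceil$, every other distribution giving strictly more than $f(r)$ once $r \geq 6$; combined with the uniqueness of the extremal claw-free matroid (the affine geometry), this also yields the equality statement within this case.

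This leaves the indecomposable case, which is the crux and the step I expect to be hardest. Fix a hyperplane $H$ of $G$; the restriction $M|H$ is triangle-free and $I_5$-free of dimension $r - 1$, so — after controlling its possible rank-deficiency, perhaps by proving at the outset a stronger, deficiency-aware version of the bound — the inductive hypothesis gives $|E \cap H| \geq f(r-1)$, and it remains to establish the sharp inequality $|E \setminus H| \geq f(r) - f(r-1) = 2^{\lfloor r/2\rfloor - 1}$ for a suitably chosen $H$. Proving this is the combinatorial heart of the argument: one must combine the local structure from the first observation with indecomposability to show that the extra points forced into rank-$5$ (and higher) flats organise themselves into an affine pattern of the required size outside $H$, and that any configuration failing this can be unwound into a direct-sum decomposition, contradicting indecomposability. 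I anticipate this requires a long but conceptually routine case analysis on the local configuration of $E$ around a carefully chosen rank-$4$ or rank-$5$ flat; running the same bookkeeping while tracking exactly when each inequality is tight then produces the equality characterisation for $r \geq 6$.
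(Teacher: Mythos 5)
There is a genuine gap, and it sits exactly where the real work lies. Your preliminary reductions are fine: the direct-sum criterion via largest induced independent flats is correct, and in the decomposable case each summand with at least two elements is claw-free and triangle-free, hence an affine geometry on its span, so that case (including equality) does reduce to an easy optimisation over dimension splits. But your ``indecomposable case'' is not a proof; it is a declared intention. The inequality $|E \setminus H| \geq f(r) - f(r-1)$ for a suitably chosen hyperplane $H$ is precisely the hard content, and nothing you propose delivers it: the only local input you extract from $I_5$-freeness (every rank-$5$ flat spanned by points of $E$ carries a sixth point, via a circuit of length $4$, $5$ or $6$) gives additive gains on bounded-rank flats, while the target $f(r)$ grows like $2^{r/2}$, so no ``case analysis on the local configuration around a rank-$4$ or rank-$5$ flat'' can produce it without a global mechanism. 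You also have no handle on the rank-deficiency of $M|H$ (you concede a ``deficiency-aware'' strengthening would be needed but do not formulate it), the ``unwinding into a direct-sum decomposition'' is not an argument, and the equality analysis for $r=6,7$ would need the equality structure of the inductive statement at dimension $5$, where it is not asserted. A small but symptomatic slip: for even $r$ one has $f(r)-f(r-1)=2^{r/2-2}$, not $2^{\lfloor r/2\rfloor-1}$.

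For comparison, the paper never inducts on hyperplanes or on connectivity. Its global mechanism is contraction: if $M$ has an induced $C_5$ on a flat $F$, then $M/F$ is shown to be claw-free (Lemma \ref{c5contraction}) and, together with a coset-size lemma (Lemma \ref{apex_of_1}) and the sharp claw-free bound $|E|\geq 2^{\lfloor r/2\rfloor}+2^{\lceil r/2\rceil}-2$ (Theorem \ref{I3free_extremal}), this already beats the target for $r\geq 6$; otherwise $M$ is affine, and one contracts a maximum affine geometry $F$, proving that $M/F$ is claw-free (else a doubled kite forces too many elements, Lemmas \ref{contraction_max_ag}--\ref{kite_too_many_elements}) and $2T$-free (Lemma \ref{2tfreeness}), and then proving a new extremal theorem for claw-free, $2T$-free matroids ($|E|\geq 2^{r-1}$, with the extremal examples $\AG(r-1,2)$ and doublings of $\PGS(1,t)$, Lemma \ref{i3_2t_free_extremal}). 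The bound and the characterisation of equality then fall out of $|E| \geq |E\cap F| + |M/F|$. None of these ingredients appears in your plan, so the missing ``combinatorial heart'' cannot be regarded as routine; it is the theorem.
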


The proof largely falls into two parts. The first step is to show that the tight examples do not contain $C_5$ as an induced restriction; $C_5$ is the $5$-element circuit. In the second part, we consider the class of $I_5$-free, $C_5$-free and triangle-free matroids to determine the tight examples.

\begin{remark}
The case $t=1$ in Conjecture \ref{bigconjecture} is trivial; it is not hard to show that the only matroids that are both $I_3$-free and triangle-free are affine geometries. 

Moreover, we remark that the analogous question for $I_4$-free and triangle-free matroids can also be answered. By using the structure theorem for $I_4$-free, triangle-free matroids [\ref{nn2}], it is straightforward to show that if $M$ is $I_4$-free and triangle-free, then $|E| \geq 2^{r-2}+1$. If equality holds, then either $M$ is the series extension of an affine geometry $\AG(r-2,2)$, or $\AG(r-2,2)$ with a coloop. Note that the tight examples are not unique for $I_4$-free and triangle-free matroids.
\end{remark}

\section{Preliminaries} 

\subsection*{Induced Restrictions} A subgeometry of $G$ is a \emph{flat} of $G$. Viewing $G$ as $\bF_n^2 \del \{0\}$, a set $F \subseteq G$ is a flat if and only if $F \cup \{0\}$ is closed under addition. Write $\vs{F} = F \cup \{0\}$. A two-dimensional flat is a \emph{triangle}, and a maximal proper flat of $G$ is a \emph{hyperplane}. Recall that a matroid $N$ is an \emph{induced restriction} (or equivalently $\emph{induced submatroid}$) of $M$ if there exists some flat $F \subseteq G$ for which $N = (E \cap F, F)$; we write $M | F$ for this matroid. 

In many of the proofs that follow, we obtain a contradiction by finding various induced restrictions. For the sake of brevity, we will sometimes say that a set $Z \subseteq E$ is an induced $N$-restriction if $M \rvert \cl(Z)$ is an induced $N$-restriction. We will also often simply assert that such a given set $Z$ is an induced $N$-restriction without performing all necessary checks explicitly. For example, when $N = I_n$, then in order to check that $Z = \{z_1, z_2, \dots, z_n\} \subseteq E$ is an induced $I_n$-restriction, one needs to check that $Z$ is independent, and $\sum_{i \in I} z_i \notin E$ for all $I \subseteq \{1,2,\dots,n\}$ and $|I| > 1$. We will not perform these calculations explicitly; instead we will provide enough information prior to such a claim so that these checks can be performed easily.

\subsection*{Contractions} Given a flat $F \subseteq G$, a \emph{coset} of $F$ is a set of the form $A = x + \vs{F}$ for some $x \in G \del F$. Note that the cosets of $F$ partition $G \del F$. We do not consider the set $F$ itself to be a coset. 

Given an $n$-dimensional matroid $M=(E,G)$ and a $k$-dimensional flat $F$ of $G$, let $F'$ be an $(n-k)$-dimensional flat of $G$ for which $F \cap F' = \varnothing$.  Then we define $M/F$, the \emph{contraction of $M$ by $F$}, to be the matroid $(E_F, F')$ where $E_F = \{x \in F' \mid E \cap (x + \vs{F}) \neq \varnothing\}$; note that such matroids are all isomorphic for any valid choice of $F'$. Every element of the ground set of the matroid $M/F$ is represented by a unique coset $A$ of $F$ for which $A \cap E \neq \varnothing$. We say that a coset $A$ is \emph{non-empty} if $A \cap E \neq \varnothing$. A coset is \emph{mixed} if $0 < |A \cap E| < |A|$. 

\subsection*{Affineness} A matroid $M = (E,G)$ is \emph{affine} if there exists a hyperplane $H \subseteq G$ for which $E \subseteq G \del H$. For $n \geq 1$, $C_n$ is the \emph{$n$-circuit}, the $(n-1)$-dimensional matroid whose ground set consists of $n$ points that add to zero. When $n$ is odd, we say that it is an \emph{odd circuit}. When $t=3$, then $C_3$ is just a triangle. The following is the matroidal analogue of the standard characterisation of bipartite graphs in terms of excluded odd cycles. The proof is standard, and is also included in [\ref{nn2}].

\begin{theorem}
A matroid $M = (E,G)$ is affine if and only if $M$ has no induced odd circuits. 
\end{theorem}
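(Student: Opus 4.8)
The plan is to recast affineness as a linear condition over $\bF_2$ and then identify that condition with the absence of induced odd circuits. Write $G = \PG(n-1,2) = \bF_2^n \del \{0\}$; every hyperplane of $G$ has the form $H_\phi = \{x : \phi(x) = 0\}$ for some nonzero $\phi \in (\bF_2^n)^\ast$, so $M = (E,G)$ is affine if and only if there is a functional $\phi$ (necessarily nonzero when $E \neq \varnothing$) with $\phi(e) = 1$ for all $e \in E$.

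For the forward direction, assume $M$ is affine and fix such a $\phi$. If $M$ had an induced restriction $M \mid F \cong C_m$ with $m$ odd, then $E \cap F$ would be a set of $m$ points whose sum is $0$; applying $\phi$ yields $0 = \phi(0) = \sum_{e \in E \cap F} \phi(e) = m \bmod 2 = 1$, a contradiction. (The same computation shows that in an affine matroid no subset of $E$ of odd size sums to $0$.)

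For the converse, assume $M$ has no induced odd circuit; I want to build $\phi$. By elementary linear algebra over $\bF_2$, the system $\{\phi(e) = 1 : e \in E\}$ has a solution if and only if the all-ones vector is orthogonal to every vector in the left kernel of the matrix whose rows are the elements of $E$ — equivalently, if and only if every subset $S \subseteq E$ with $\sum_{e \in S} e = 0$ has even size. So it suffices to prove that, under our hypothesis, every such dependency is even. Suppose otherwise and choose $S \subseteq E$ with $\sum_{e \in S} e = 0$, with $|S|$ odd, and with $|S|$ minimum (so $|S| \geq 3$, since no point equals $0$ and no two distinct points sum to $0$). I would then show that $M \mid \cl(S) \cong C_{|S|}$, giving the desired induced odd circuit. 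That $S$ is a circuit follows from minimality: a proper dependency $S' \subsetneq S$ would split $S$ into two dependencies $S'$ and $S \del S'$ of sizes summing to the odd number $|S|$, one of which would be a smaller odd dependency. Since $S$ is then a binary circuit, $S \del \{s\}$ is independent and spans $\cl(S)$, so $\dim \cl(S) = |S| - 1$ and $M \mid \cl(S)$ has exactly $|S|$ points — provided $E \cap \cl(S) = S$. To see the latter, suppose $e \in (E \cap \cl(S)) \del S$; then $e = \sum_{f \in T} f$ for some $T \subseteq S$ with $2 \leq |T| \leq |S| - 2$ (the sizes $0, 1, |S|-1, |S|$ all force $e \in S \cup \{0\}$, the case $|T| = |S|-1$ because $T = S \del \{s\}$ gives $e = s$). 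Then $T \cup \{e\}$ and $(S \del T) \cup \{e\}$ are both dependencies inside $E$, of sizes $|T|+1$ and $|S| - |T| + 1$; these are each at most $|S|-1$ and sum to the odd number $|S| + 2$, so one of them is a smaller odd dependency — contradicting the choice of $S$.

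The only non-routine step is this last one: upgrading a smallest odd dependency from a circuit to an \emph{induced} circuit, i.e.\ ruling out extra ground-set elements in its closure. The parity bookkeeping among $T$, $S \del T$ and the spurious element $e$, together with the observation that $|T| = |S|-1$ would return $e$ to $S$ (so that both derived dependencies are strictly smaller than $S$), is exactly what closes off this case.
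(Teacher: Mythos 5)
Your proof is correct and complete. Note that the paper does not actually supply an argument for this statement: it declares the proof standard and points to reference [5] ([nn2]), so there is no in-paper proof to diverge from. Your route --- encoding affineness as solvability of the $\bF_2$-linear system $\phi(e)=1$ for $e\in E$, reducing it via the orthogonality criterion to the statement that every zero-sum subset of $E$ has even cardinality, and then upgrading a minimum odd dependency $S$ to an \emph{induced} circuit by the parity analysis of $T\cup\{e\}$ and $(S\del T)\cup\{e\}$ --- is exactly the standard argument (the matroid analogue of the odd-cycle characterization of bipartite graphs), and you correctly handle the one point where care is needed, namely excluding spurious elements of $E$ in $\cl(S)$, including the boundary cases $|T|\in\{0,1,|S|-1,|S|\}$. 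No gaps.
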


Observe that odd circuits of length $6$ or more contain an induced $I_5$-restriction. Hence, if $M$ is an $I_5$-free, triangle-free matroid, then $M$ is affine if and only if there exists no induced $C_5$-restriction of $M$.

\subsection*{Doublings} Let $M=(E,G)$ be a matroid, and let $H$ be a hyperplane of $G$, and $a \in G \del H$. Then $M$ is the \emph{doubling of $M \rvert H$ by $a$} if $E = \vs{a} + (E \cap H)$. If this condition holds, then in fact $M$ is the doubling of $M \rvert H'$ for any hyperplane $H'$ not containing $a$. 

It is easy to see that any two doublings of the same matroid $N$ are isomorphic. The element $a$ is the \emph{apex}. We sometimes omit the element $a$, and simply say that $M$ is the \emph{doubling of $N$}, or even say that $M$ is a doubling if such $a$ and $N$ exist. We write $D(N)$ to denote the doubling of $N$, and define $D^k(N) = D(D^{k-1}(N))$ recursively for $k > 1$. We say that $D^k(N)$ is the \emph{$k$-doubling of $N$}, or \emph{the doubling of $N$ of order $k$}. The apex in the case of a $k$-doubling is a $k$-dimensional flat $D$ for which $E = \vs{D} + (E \cap F)$ where $M \rvert F = N$. 

\subsection*{Claw-freeness}
In this paper, we will often resort to the properties of claw-free matroids, as these matroids are well-understood. In fact, there exists a full structure theorem for all claw-free matroids [\ref{nn1}], although we do not need it in this paper. Here, we collect some facts about claw-free matroids that we will use in this paper.

\begin{lemma}[{[\ref{bkknp}]}]\label{i3triangle}
If $M$ is a full-rank, $I_3$-free, triangle-free matroid, then $M$ is an affine geometry.
\end{lemma}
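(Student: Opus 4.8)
The plan is to argue directly, in coordinates, that $E$ is the non-trivial coset of a linear hyperplane of $\bF_2^r$; this is precisely the assertion that $M \cong \AG(r-1,2)$. The first step is to extract the only two consequences of the hypotheses that the argument needs. Because $M$ is triangle-free, $x+y \notin E$ for any two distinct $x,y \in E$. Because $M$ is additionally $I_3$-free, any three independent points $x,y,z \in E$ must satisfy $x+y+z \in E$: otherwise $\cl(\{x,y,z\})$ is a $3$-dimensional flat whose intersection with $E$ is exactly $\{x,y,z\}$ — the three pairwise sums being already excluded by triangle-freeness — and this would be an induced claw.

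Next I would fix a point $e_0 \in E$ (the set $E$ is non-empty since $M$ is full-rank of positive dimension) and set $W := (E + e_0) \cup \{0\} \subseteq \bF_2^r$. The crux is to verify that $W$ is closed under addition, hence is a subspace. Given $u,v \in W$, the cases $u = 0$, $v = 0$ and $u = v$ are immediate, so I may write $u = x + e_0$ and $v = y + e_0$ with $x,y \in E$, and after discarding the trivial subcases we may assume $x \neq y$ and $x,y \neq e_0$. If the triple $\{x,y,e_0\}$ were dependent, then being three distinct points of a projective line it would satisfy $x+y+e_0 = 0$ and so be a triangle contained in $E$, which is impossible; hence $x,y,e_0$ are independent, so $x+y+e_0 \in E$ by the previous paragraph, and therefore $u+v = x+y = (x+y+e_0)+e_0 \in E + e_0 \subseteq W$.

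Once $W$ is known to be a subspace, the conclusion follows quickly: $E = W + e_0$ is a coset of $W$, and since $0 \notin E$ we have $e_0 \notin W$; since $E$ spans $G$, $\bF_2^r = \langle E \rangle = W + \langle e_0 \rangle$, which forces $\dim W = r-1$. Then $H := W \del \{0\}$ is a hyperplane of $G$ with $E = G \del H$, so $M$ is the affine geometry $\AG(r-1,2)$.

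I do not expect a genuine obstacle: once the hypotheses are translated, the argument is elementary linear algebra over $\bF_2$. The two points deserving a little care are both in the closure step — recognising that a \emph{dependent} triple inside $E$ is forbidden not by $I_3$-freeness but directly by triangle-freeness, and then observing that the coset $E = W + e_0$ is the \emph{full} affine geometry rather than merely an affine matroid, which is immediate from $|E| = |W| = 2^{r-1}$.
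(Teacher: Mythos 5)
Your proof is correct. Note, though, that the paper does not prove this lemma at all: it is quoted as Lemma~\ref{i3triangle} from the reference [\ref{bkknp}], where it arises from the structural analysis of claw-free, triangle-free binary matroids, and the present paper only uses the statement. So your contribution is a short, self-contained replacement for that citation, and it holds up under scrutiny: triangle-freeness gives $x+y\notin E$ for distinct $x,y\in E$; $I_3$-freeness (together with the pairwise sums already being excluded) forces $x+y+z\in E$ for every independent triple in $E$; and your check that a dependent triple of distinct points would itself be a triangle in $E$ correctly rules out the only other dependency possible over $\bF_2$. From there, the closure of $W=(E+e_0)\cup\{0\}$ under addition, the identification $E=W+e_0$ with $e_0\notin W$, and the dimension count $\dim W=r-1$ forced by full-rankness are all sound, and the cardinality argument $|E|=2^{r-1}=|G\setminus H|$ does show $E$ is the entire complement of the hyperplane $H=W\setminus\{0\}$, i.e.\ $M\cong\AG(r-1,2)$. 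The only cosmetic remark is that $E+e_0$ already contains $0$, so adjoining $\{0\}$ is redundant, and the degenerate case $r=1$ is swallowed by the same argument; neither affects correctness. What your approach buys is independence from [\ref{bkknp}]: the lemma is exactly the ``$t=1$'' fact the authors describe as easy in their remark, and your coset argument is the natural elementary way to see it.
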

 
The next is in fact a special case of Theorem \ref{noclaw} proved by Nelson and Norin [\ref{nnorin}]. The following special case for claw-free matroids first appeared in [\ref{nn1}].

\begin{theorem}[{[\ref{nn1}], [\ref{nnorin}]}]\label{I3free_extremal}
Let $M = (E,G)$ be an $r$-dimensional, $I_3$-free matroid. Then $|E| \geq 2^{\lfloor r/2 \rfloor} + 2^{\lceil r/2 \rceil}-2$.
\end{theorem}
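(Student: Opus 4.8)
I will prove the bound $|E| \geq 2^{\lfloor r/2 \rfloor} + 2^{\lceil r/2 \rceil} - 2$ for an $r$-dimensional $I_3$-free matroid $M = (E,G)$ by induction on $r$, using contraction by a well-chosen flat to drop the dimension while controlling the loss in $|E|$. The base cases $r \leq 2$ are immediate ($I_3$-freeness is vacuous for $r \leq 2$, and every point of $G$ must lie in $E$ for $M$ to be full-rank in those ranks, giving the exact values $2^0+2^0-2=0$, $2^0+2^1-2=1$, $2^1+2^1-2=2$; more carefully one checks small ranks directly). Note that if $M$ is not full-rank we may pass to the flat spanned by $E$, lowering $r$ and only making the target smaller, so it suffices to treat the full-rank case, and then $r$-dimensional means $E$ spans $G$.

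\textbf{Key steps.} First I would find a point $x \in G$ (typically in $E$) such that the contraction $M/x$ is again $I_3$-free and has dimension $r-1$; this is automatic since contracting a matroid preserves the property of being $I_3$-free (an induced claw in $M/x$ would pull back to an induced claw or a larger independent flat in $M$, and in the binary setting a claw in $M/x$ lifts to a claw in $M$ using $x$ and three cosets meeting $E$ — one must check the relevant sums avoid $E$, which follows from the definition of the contracted ground set together with $I_3$-freeness of $M$). By induction, $|M/x| \geq 2^{\lfloor (r-1)/2 \rfloor} + 2^{\lceil (r-1)/2 \rceil} - 2$. Second, I would relate $|E|$ to $|M/x|$: writing $H$ for a hyperplane avoiding $x$, the cosets of $\vs{x}$ partition $G \setminus H$, and $|E|$ equals the number of points of $E \cap H$ plus the number of points of $E$ off $H$; each non-empty coset contributes at least one point, so $|E| \geq |M/x| + (\text{extra points from mixed or doubled cosets})$. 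The crux is to show the "extra" is at least $2^{\lceil (r-1)/2\rceil -1}$ or the appropriate quantity to bridge the gap between $2^{\lfloor (r-1)/2 \rfloor} + 2^{\lceil (r-1)/2 \rceil} - 2$ and $2^{\lfloor r/2 \rfloor} + 2^{\lceil r/2 \rceil} - 2$, which differ by exactly $2^{\lfloor r/2\rfloor - 1} + 2^{\lceil r/2 \rceil - 1} - 2^{\lfloor (r-1)/2\rfloor - 1}$ or similar; in fact since $\lfloor r/2\rfloor + \lceil r/2 \rceil = r$ and these track the partition into two near-equal halves, the increment is $2^{\lfloor (r-1)/2 \rfloor}$. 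So I need: the number of non-empty cosets of $\vs{x}$ that are \emph{mixed} (i.e. contain a point of $E$ and a point not in $E$), plus twice the number that are fully contained in $E$, weighted appropriately, is at least $2^{\lfloor (r-1)/2 \rfloor}$.

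\textbf{The structural heart.} To control the cosets I would look at $M/x = (E_x, H)$ and argue about which points $y \in E_x$ correspond to \emph{cosets contained in $E$} versus \emph{mixed cosets}. Every coset of $\vs{x}$ has exactly $2$ points besides $0$... no — a coset $a + \vs{x}$ has $|\vs{x}| = 2$ elements, so exactly two points, one of which could be in $H$ after translation; more precisely a coset has two points and is either entirely in $E$ (contributing $2$ to $|E|$ for one contributed point of $M/x$), or mixed (contributing $1$). So $|E| = |E \cap H| + \sum_{\text{cosets}} |A \cap E| = (\text{number of cosets fully in } E) \cdot 2 + (\text{number of mixed cosets})$, while $|M/x|$ counts all non-empty cosets once. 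Hence $|E| = |M/x| + (\text{number of cosets fully in } E)$. So the reduction succeeds as soon as I can guarantee at least $2^{\lfloor (r-1)/2 \rfloor}$ cosets are fully contained in $E$. Equivalently, I want the "doubled part" of $M$ over $H$ to be large. Here is where $I_3$-freeness does its work: if too few cosets were fully in $E$, then the set of "apex directions" where doubling fails is large, and I can extract an independent flat of rank $3$ inside $M$. Concretely, I would choose $x$ cleverly — e.g. take $H$ to be a hyperplane with $|E \cap H|$ maximum (so $M/x$ achieves the inductive bound with $H$ a "densest" hyperplane) — and then show the complementary points of $E$ off $H$ are forced by claw-freeness to pair up into full cosets.

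\textbf{Main obstacle.} The hard part will be precisely this last structural lemma: showing that an $I_3$-free matroid is "nearly a doubling" in the sense needed, i.e. that the number of full cosets is at least $2^{\lfloor (r-1)/2 \rfloor}$. I expect this requires a careful case analysis, possibly splitting on whether $M/x$ is itself full-rank and whether the densest hyperplane $H$ carries strictly more than half the points; the induction may need to be strengthened to a statement about pairs (number of points, structure of densest hyperplane), or one may need to induct on a more refined parameter. An alternative route that sidesteps some of this: use Lemma \ref{i3triangle} and the known classification of claw-free matroids to first handle the triangle-free case, then peel off triangles one flat at a time — but since triangles only add points, the bound in the presence of triangles should follow with room to spare, so the genuinely tight analysis is the triangle-free one, where $M$ restricted to a suitable flat is an affine geometry and the counting becomes explicit.
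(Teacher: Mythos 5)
This statement is not proved in the paper at all: it is quoted as a known result (a special case of Theorem~\ref{noclaw}, from [\ref{nn1}] and [\ref{nnorin}]), so there is no internal proof to compare against; your attempt therefore has to stand on its own, and as written it does not. The reduction bookkeeping is fine up to small slips (the cosets of the flat $\{x\}$ partition $G \setminus \{x\}$, not $G \setminus H$, so the correct identity is $|E| = \1{x \in E} + |M/x| + (\text{number of cosets of } \{x\} \text{ contained in } E)$, and contraction does preserve $I_3$-freeness since an induced claw in $M/x$ lifts to one in $M$). The genuine gap is the step you yourself flag as "the structural heart": you need, for some $x \in E$, at least $2^{\lfloor (r-1)/2 \rfloor}-1$ cosets of $\{x\}$ fully contained in $E$, and you assert that otherwise "the set of apex directions where doubling fails is large" and a claw can be extracted. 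That claim is false as stated: the affine geometry $\AG(r-1,2)$ is $I_3$-free, yet for every $x \in E$ there is \emph{no} coset $\{y,y+x\}$ contained in $E$ (if $y \in E$ then $y+x$ lies in the empty hyperplane), and no claw appears. In that example the induction only survives because $|M/x|$ is the full projective geometry and so vastly exceeds the inductive bound; so the correct inductive step is not "many full cosets" but a dichotomy of the form "either the contraction is much larger than $f(r-1)$ or there are many full cosets," and proving that dichotomy is essentially the whole difficulty of the theorem. Your sketch defers it entirely ("careful case analysis, possibly strengthen the induction"), so what you have is a plan with its key lemma missing and, in the form stated, refuted by a standard example.

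If you want a workable route, note that the actual sources do not argue point-by-point doubling: Nelson and Norin prove the general Theorem~\ref{noclaw} by a global argument about independent flats, and [\ref{nn1}] obtains the claw-free bound via structural information about claw-free matroids (in the triangle-free case Lemma~\ref{i3triangle} pins the matroid down completely, and the triangle-rich case needs its own analysis, e.g.\ contracting a triangle or exploiting a suitable decomposition), rather than the single-element contraction you propose.
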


\section{The none-affineness of extremal examples}
In this section our goal is to show that the smallest $I_5$-free and triangle-free matroids are affine when $r \geq 6$. The strategy is to start with an induced $C_5$-restriction and show that such matroids always contain too many elements. The argument hinges on the following observation, which will allow us to transform our problem to that of $I_3$-free matroids, which we understand well.

\begin{lemma}\label{c5contraction}
Let $M = (E,G)$ be a full-rank, $I_5$-free and triangle-free matroid. Let $F$ be a flat of $G$ so that $M | F \cong C_5$. Then $M / F$ is $I_3$-free. 
\end{lemma}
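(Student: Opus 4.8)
The plan is to argue by contradiction: suppose $M/F$ contains an induced claw, and produce either an induced $I_5$-restriction or a triangle of $M$, contradicting the hypotheses. Let $F = \cl(\{f_1,\dots,f_5\})$ where $f_1,\dots,f_5 \in E$ add to zero, so $F$ is a $4$-dimensional flat and $M|F \cong C_5$. Since $M/F$ has an induced claw, there are three cosets $A_1, A_2, A_3$ of $F$ with $A_i \cap E \neq \varnothing$, whose representative points $x_1, x_2, x_3$ in some complementary flat $F'$ are independent, and such that no nonempty sum $\sum_{i \in I} x_i$ with $|I| \ge 2$ is a non-empty coset — i.e. $(A_1 + A_2) \cap E = (A_1 + A_3) \cap E = (A_2 + A_3) \cap E = (A_1+A_2+A_3)\cap E = \varnothing$ (interpreting $A_i + A_j$ as the coset $x_i + x_j + \vs{F}$). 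Pick $a_i \in A_i \cap E$ for each $i$.

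The first key step is to understand the interaction of a single chosen point $a \in E$ lying in a coset $A$ of $F$ with the $C_5$-restriction on $F$: I would look at the $5$-element set $\{a, f_1, \dots, f_5\}$ and its closure, a $5$-dimensional flat, and use $I_5$-freeness and triangle-freeness to constrain which sums $a + \sum_{i \in S} f_i$ lie in $E$. Because $f_1,\dots,f_5$ already form an induced $C_5$, the matroid $M$ restricted to $\cl(\{a\} \cup F)$ is a triangle-free, $I_5$-free extension of $C_5$ inside a rank-$5$ geometry; pushing on this should force $a$ to "see" the $C_5$ in a very restricted way (for instance, $a+f_i+f_j \in E$ for a controlled set of pairs, while $a+f_i \notin E$ for all $i$ by triangle-freeness, and $a + f_i + f_j + f_k + f_l \notin E$, etc.). The goal of this step is a normal form for how each $a_i$ attaches to $F$.

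The second key step combines two of the chosen points, say $a_1 \in A_1$ and $a_2 \in A_2$. Since $(A_1 + A_2)\cap E = \varnothing$, the point $a_1 + a_2$ and all points of $a_1 + a_2 + \vs{F}$ lie outside $E$; in particular $a_1 + a_2 + f_i \notin E$ and $a_1 + a_2 + f_i + f_j \notin E$ for all $i,j$. I would then select a suitable $4$- or $5$-element independent subset of $\{a_1, a_2, f_1, \dots, f_5\}$ — e.g. $\{a_1, a_2, f_1, f_2, f_3\}$ after possibly relabeling the $f_i$ using the normal form from step one — and verify directly that it is an induced $I_5$-restriction: independence is clear since $a_1, a_2 \notin F$ and lie in distinct cosets, and every proper nonempty sum of size $\ge 2$ must be checked to avoid $E$, using the emptiness of $A_1+A_2$ together with the step-one constraints on the $a_i + (\text{sum of }f\text{'s})$. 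This contradicts $I_5$-freeness. (If instead the normal forms of $a_1$ and $a_2$ conspire so that no such $I_5$ appears, that rigidity should instead yield a triangle or let us bring in $a_3$ and the independence of $x_1, x_2, x_3$ to finish.)

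The main obstacle I anticipate is step one: controlling exactly which sums $a + \sum_{i\in S} f_i$ lie in $E$ for a single point $a$ attached to the $C_5$-flat $F$. There are $2^5$ cosets-worth of places these could sit, and $I_5$-freeness and triangle-freeness alone must be leveraged cleverly — likely by repeatedly extracting $5$-element independent sets from $\{a\} \cup \{f_i\}$ and also using the cyclic/circuit structure $f_1 + \dots + f_5 = 0$ to trade one $f_i$ for the sum of the other four. Once this "attachment lemma" is in hand, the two-coset argument in step two should be a relatively mechanical (if slightly tedious) verification that a specific $5$-set is an induced $I_5$.
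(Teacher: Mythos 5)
There is a genuine gap: both of your key steps are left as intentions, and the route you sketch for step two provably cannot close without the three-coset analysis that you defer to a parenthetical. First, the hoped-for ``attachment lemma'' for a single coset is much weaker than you need. For a representative $a \in A \cap E$ of a non-empty coset $A$ of $F$, the only constraints available are that $a+f_i \notin E$ (triangle-freeness) and that the pairs $\{i,j\}$ with $a+f_i+f_j \in E$ pairwise intersect (two fills on disjoint pairs sum to the fifth $f_m$, giving a triangle). In particular the set of fills may be empty: $E=\{f_1,\dots,f_5,a\}$ is already triangle-free and $I_5$-free, since any five independent elements of it include four $f_i$'s whose sum is the fifth, so no ``normal form'' forces any fills to exist. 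Second, step two cannot succeed using only $a_1,a_2$ and the $f_i$. Consider the fill pattern $a_1+f_1+f_j \in E$ for all $j\neq 1$, $a_2+f_2+f_j\in E$ for all $j\neq 2$, with $(A_1+A_2)\cap E=\varnothing$: this is consistent with triangle-freeness (same-coset sums land on $f_j+f_k\notin E$, cross-coset sums land in the empty coset $A_1+A_2$), yet every candidate $\{a_1,a_2,f_i,f_j,f_k\}$ fails to be an induced $I_5$ --- if $1\in\{i,j,k\}$ or $2\in\{i,j,k\}$ a filled pair lies inside the triple, and if $\{i,j,k\}=\{3,4,5\}$ then $a_1+f_3+f_4+f_5=a_1+f_1+f_2\in E$ lies in the closure. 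So two cosets alone never yield the contradiction, and everything rests on ``bring in $a_3$,'' which is exactly the part you do not carry out.

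That three-way interaction is the entire content of the paper's proof: it colours the edge $v_iv_j$ of the complete graph on $\{v_1,\dots,v_5\}$ with colour $k$ whenever $a_k+f_i+f_j\in E$, and shows (i) every edge receives a colour, since an uncoloured edge makes $\{a_1,a_2,a_3,f_i,f_j\}$ an induced $I_5$ (this already needs all three cosets and the emptiness of all four sum-cosets); (ii) no colour appears on two disjoint edges, by a triangle; and (iii) no colour class is a triangle, by a further argument that pins down the whole colouring and exhibits another $I_5$. Hence each colour class is a star, and three stars cannot cover $K_5$, contradicting (i). Your intersecting-family observation is the single-colour shadow of (ii), and your bookkeeping of which $a_k+f_i+f_j$ lie in $E$ is indeed the right object, but the pigeonhole among the three colours in (i) and the elimination of monochromatic triangles in (iii) are the real work; to complete your proposal you would essentially have to reproduce that case analysis.
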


\begin{proof}
Suppose for a contradiction that $M / F$ contains an induced $I_3$-restriction. Let $A_1, A_2, A_3$ be the three non-empty cosets of $F$ that correspond to this induced $I_3$-restriction.  Fix $a_k \in A_k \cap E$. Denote the five elements of $M | F$ by $\{x_1,x_2, x_3, x_4, x_5\}$, and consider the colouring of the complete graph on $5$ vertices labelled $V = \{v_1, v_2, v_3, v_4, v_5\}$. We colour the edge $v_i v_j$ by colour $k$ if $a_k + x_i + x_j \in E$ (so an edge can have more than one colour at the same time). We now make the following sequence of observations.

\begin{claim}\label{somecolour}
Every edge is coloured with at least one colour.
\end{claim}

\begin{subproof}
If there exists an edge $e = v_i v_j$ that is not coloured at all, then $\{a_1, a_2, a_3, x_i, x_j\}$ is an induced $I_5$-restriction, a contradiction.
\end{subproof}

\begin{claim}\label{non_adj_colour}
If an edge is coloured with colour $k$, then the non-adjacent edges are not coloured with $k$.
\end{claim}

\begin{subproof}
Suppose otherwise. Without loss of generality, suppose that the edge $e=v_1 v_2$, is coloured $k$, and that there exists a non-adjacent edge $e' = v_3 v_4$ that is coloured $k$. Then $\{a_k + x_1 + x_2, a_k + x_3 + x_4, x_5 \}$ is a triangle, a contradiction.
\end{subproof}

The above statement, \ref{non_adj_colour}, implies that each colour class is either a triangle or a star in the graph. 

\begin{claim}\label{non_mono_triangle}
There is no triangle whose edges share the same colour.
\end{claim}

\begin{subproof}
Suppose for a contradiction that there is a triangle, say $\{v_3 v_4, v_4 v_5, v_3 v_5\}$ coloured with colour $3$. By \ref{somecolour} and \ref{non_adj_colour}, the edge $v_1 v_2$ has at least one of the colours $1$, $2$. Suppose without loss of generality that it is coloured with colour $1$ (and potentially with colour $2$ as well). 

Now by \ref{non_adj_colour} again, we can conclude that each one of the edges $\{v_1v_3,v_1v_4, v_1v_5, v_2v_3,v_2v_4,v_2v_5\}$ cannot be coloured with $3$. Moreover, by using \ref{somecolour} and \ref{non_adj_colour}, each of them is coloured with exactly one of $1$ or $2$; if, without loss of generality, $v_1v_3$ was coloured $1$ and $2$, and then $v_2v_4$ would have no available colours, a contradiction. 

Now, we claim that $\{v_1v_3,v_1v_4, v_1v_5\}$ share the same colour, and $\{v_2v_3,v_2v_4, v_2v_5\}$ share the other colour. Suppose that $v_1v_3$ has colour $1$. Then by \ref{non_adj_colour}, $v_2v_4$ and $v_2v_5$ have colour $2$. By \ref{non_adj_colour}, this then implies that $v_1v_4$ and $v_1v_5$ have colour $1$. Finally, by \ref{non_adj_colour} again, this implies that $v_2v_3$ has colour $2$. 

But then $\{x_1, x_3, x_1+x_2+a_1, a_2, a_3\}$ is an induced $I_5$-restriction, a contradiction.
\end{subproof}

Now we can finish the proof. By \ref{non_adj_colour}, we have that each colour class is either a triangle or a star in the graph. By \ref{non_mono_triangle}, we have that each colour class is a star. Let $c_k$ be the centre of the star corresponding to the colour $k$. Since $|V \del \{c_1, c_2, c_3\}| \geq 2$, pick two distinct vertices $v_i, v_j \in V \del \{c_1, c_2, c_3\}$. But then the edge $v_i v_j$ is not coloured with any colour, contradicting \ref{somecolour}.
\end{proof}

We will make one more observation about the cosets of $F$.  

\begin{lemma}\label{apex_of_1}
Let $A, B, C$ be three cosets of $F$ where $M \rvert F \cong C_5$ for which $|A \cap E|=1$ and $B+ C= A$, $|B \cap E| \leq |C \cap E|$ and $|C \cap E| \neq 0$. If $|B \cap E| \leq 2$, then $|C \cap E| \geq 3$.
\end{lemma}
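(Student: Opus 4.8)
The plan is to assume $|C \cap E| \le 2$ and derive a contradiction by exhibiting either a triangle or an induced $I_5$-restriction. Write $\beta = |B \cap E|$ and $\gamma = |C \cap E|$, so $\beta \le \gamma \le 2$ and $\gamma \ge 1$; let $\{x_1,\dots,x_5\}$ be the ground set of $M | F \cong C_5$ (the $x_i$ sum to zero and every four of them are independent), and let $a$ be the unique element of $A \cap E$. When $\gamma = 2$ let $\delta_C$ be the sum of the two elements of $C \cap E$, and similarly let $\delta_B$ be the sum of the two elements of $B \cap E$ when $\beta = 2$; these are nonzero elements of $\vs{F}$. Everything takes place inside the $6$-dimensional flat $\cl(F \cup B \cup C) = F \cup A \cup B \cup C$, whose $E$-elements are $\{x_1,\dots,x_5\}$ together with the at most $1+\beta+\gamma$ elements lying in $A \cup B \cup C$.

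For a $3$-element subset $S$ of $\{1,\dots,5\}$ put $W_S = \vs{\{x_i : i \in S\}}$, a $3$-dimensional subspace of $\vs{F}$ whose seven nonzero elements are exactly the nonempty subset-sums $\sum_{i \in T} x_i$ with $\varnothing \ne T \subseteq S$. I would first record two easy facts: (i) no subset-sum of the $x_i$ of size at least $2$ lies in $E$ — such a sum equal to some $x_m$ would force a linear dependence among at most four of the $x_i$; and (ii) for any fixed nonzero $v \in \vs{F}$ the number of $3$-subsets $S$ with $v \in W_S$ is $6$ if $v$ is one of the $x_j$ and $4$ if $v = x_j + x_k$, hence at most $6$; consequently a $3$-subset avoiding any two prescribed nonzero elements of $\vs{F}$ always exists (in the tightest case, two distinct $x_j, x_k$, exactly the single set $\{1,\dots,5\}\setminus\{j,k\}$ survives). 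The gadget is then: if $p,q \in E$ lie in distinct cosets among $\{A,B,C\}$, $R$ is the third, and $S$ is a $3$-subset, the set $Z = \{p,q\} \cup \{x_i : i \in S\}$ is independent of size $5$, and $M | \cl(Z) \cong I_5$ \emph{unless} $W_S$ contains a nonzero element of $Y_{p,q} := \{p+e : e \in (\text{coset of }p)\cap E\} \cup \{q+e : e \in (\text{coset of }q)\cap E\} \cup \{(p+q)+e : e \in R\cap E\}$; moreover if $p+q \in E$ then $\{p,q,p+q\}$ is a triangle, contradicting triangle-freeness. (The ``unless'' clause just lists which subset-sums of $Z$ of size $\ge 2$ could lie in $E$, using (i) for the all-$x_i$ sums.)

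Now I would split into three cases. If $\beta = 0$: take $p = a$ and $q \in C \cap E$, so $R = B$ and $B \cap E = \varnothing$ makes $Y_{a,q} \setminus\{0\} \subseteq \{\delta_C\}$ (empty if $\gamma = 1$); choosing $S$ with $\delta_C \notin W_S$ makes $Z$ an induced $I_5$-restriction. If $\beta = \gamma = 1$: take $p = b$, $q = c$, $R = A$; if $b+c = a$ then $\{a,b,c\}$ is a triangle, and otherwise $Y_{b,c}\setminus\{0\} = \{a+b+c\}$, so choosing $S$ with $a+b+c \notin W_S$ makes $Z$ an induced $I_5$-restriction. If $\gamma = 2$ and $\beta \ge 1$: take $p \in B \cap E$, $q \in C \cap E$, $R = A$; if some choice gives $p+q = a$ we have a triangle $\{a,p,q\}$, so assume not, and then $Y_{p,q} \setminus \{0\} \subseteq \{\delta_B, \delta_C, a+p+q\}$ (with $\delta_B$ present only if $\beta = 2$). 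The key point is that as $(p,q)$ ranges over $(B\cap E)\times(C\cap E)$ the element $a+p+q$ ranges over a coset of the subspace $\vs{\{\delta_B,\delta_C\}}$ (reading an absent $\delta_B$ as $0$). By (ii) choose $S$ with $\delta_B \notin W_S$ and $\delta_C \notin W_S$; then this coset cannot be contained in $W_S$ (that would force $\delta_C \in W_S$), so for some $(p,q)$ we also get $a+p+q \notin W_S$, whence $W_S \cap Y_{p,q} = \{0\}$ and $Z$ is an induced $I_5$-restriction. Every case yields a contradiction, so $|C \cap E| \ge 3$.

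The step I expect to be the real obstacle is the last case $(\beta,\gamma) = (2,2)$: there one must control up to three ``forbidden'' elements of $\vs{F}$ simultaneously, and the crude count ``each forbids at most six of the ten $3$-subsets'' is not by itself enough. What saves it is that these elements are not arbitrary — $\delta_B$ and $\delta_C$ are fixed while the four possible values of $a+p+q$ form a coset of $\vs{\{\delta_B,\delta_C\}}$ — so as soon as $W_S$ misses $\delta_B$ and $\delta_C$ it automatically misses at least one admissible value of $a+p+q$; verifying this coset structure together with the ``two distinct singletons leave one survivor'' count is where I would be most careful.
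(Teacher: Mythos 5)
Your proof is correct, and it takes a genuinely different route from the paper's. Both arguments ultimately hunt for the same configurations --- a triangle, or an induced $I_5$ formed by two points from two of the cosets $A,B,C$ together with three of the circuit elements --- but the paper, after handling $|B\cap E|=0$ directly, fixes $b_1\in B\cap E$, $c_1\in C\cap E$ and records which translates $b_1+x_i+x_j$, $c_1+x_i+x_j$, $b_1+c_1+x_i$, $b_1+c_1+x_i+x_j$ lie in $E$ as a colouring of the vertices and edges of $K_5$; triangle-freeness and $I_5$-freeness give a few structural constraints on this colouring, and the proof concludes with a case analysis that the paper explicitly omits as routine. You instead run a uniform gadget: for $p,q$ in two of the cosets and a $3$-subset $S$, the set $\{p,q\}\cup\{x_i : i\in S\}$ is an induced $I_5$ unless $W_S$ meets a short explicit list of forbidden nonzero vectors (or $p+q\in E$, a triangle), and you then produce a good $S$ via the count in (ii), together with --- in the hardest case $\beta=\gamma=2$ --- the observation that the possible values of $a+p+q$ sweep out a coset of the subspace generated by $\delta_B$ and $\delta_C$, so once $W_S$ misses $\delta_B$ and $\delta_C$ some choice of $(p,q)$ misses the third forbidden vector as well; since $|C\cap E|=2$ forces $\delta_C\neq 0$, a coset contained in $W_S$ would indeed put $\delta_C$ in $W_S$, so this step is sound. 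What your route buys is completeness and uniformity: it replaces the paper's omitted colouring case analysis by a short counting-plus-coset argument (the verification of (ii), with the ``two distinct singletons leave exactly one survivor'' case, is precisely the delicate point, and you identified and checked it correctly), and it absorbs the $\beta=0$ case into the same gadget instead of treating it ad hoc as the paper does. One small misstatement: fact (i) is false as written for subset-sums of size $4$ or $5$, since the sum of any four circuit elements equals the fifth, which lies in $E$; because you only ever apply (i) to sums of two or three of the $x_i$ taken from a $3$-subset $S$, the argument is unaffected, but (i) should be stated only for sums of at most three of the $x_i$ (equivalently, for subset-sums taken within $S$).
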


\begin{proof}Suppose for a contradiction that $|B \cap E| \leq 2$ but $|C \cap E| \leq 2$. 

We first consider the case $|B \cap E| = 0$. Fix $a_1 \in A \cap E$ and $c_1 \in C \cap E$. Then $|C \cap E| =2$; otherwise $\{a_1, c_1, x_1, x_2, x_3\}$ for any three elements $x_1, x_2, x_3 \in E \cap F$ gives an induced $I_5$-restriction. Let $c_2 \in C \cap E$. Since $M$ is triangle-free, we have that $c_1 + c_2 = x_1+x_2$ for some two $x_1, x_2 \in E \cap F$. Pick any other two elements $x_3, x_4 \in E \cap F$. Then $\{a_1, c_1, x_1, x_3, x_4\}$ is an induced $I_5$-restriction, a contradiction. 

Hence we may assume that $|B \cap E| > 0$. Now fix $b_1 \in B \cap E$ and $c_1 \in C \cap E$. Denote the five elements of $M | F$ by $\{x_1,x_2, x_3, x_4, x_5\}$, and consider the colouring of the complete graph on $5$ vertices labelled $V = \{v_1, v_2, v_3, v_4, v_5\}$. We colour the edge $v_i v_j$ by colour $b$ if $b_1 + x_i + x_j \in E$, and $c$ if $c_1 + x_i + x_j \in E$. Since $0 < |B \cap E|, |C \cap E| \leq 2$, there is at most one edge of colour $b$ and $c$. We colour the vertex $v_i$ with colour $a$ if $b_1+c_1 + x_i \in E$, and the edge $v_i v_j$ by colour $a$ if $b_1+c_1 + x_i + x_j \in E$. Note that there is precisely one object (vertex or edge) that is coloured $a$. We make the following claims. 

\begin{claim}
If an edge is coloured $b$ or $c$, then it cannot be coloured $a$. 
\end{claim}

\begin{subproof}
Suppose not, and assume without loss of generality that the edge $v_1 v_2$ is coloured with $a$, one of $b$ and $c$. If the extra colour if $b$, then $\{b_1+c_1+x_1+x_2, b_1+x_1+x_2, c_1\}$ is a triangle, a contradiction. The case the extra colour is $c$ is symmetrical. 
\end{subproof}

\begin{claim}
If two non-adjacent edges $v_iv_j$ and $v_k v_l$ are coloured with $b$ and $c$ respectively, then the vertex $v_t$ cannot be coloured $a$ where $v_t \in V \del \{v_i, v_j, v_l, v_k\}$.
\end{claim}

\begin{subproof}
If not, then $\{b_1+c_1+x_t, b_1+x_i+x_j, c_1+x_k + x_l\}$ is a triangle. 
\end{subproof}

\begin{claim}
There are no three uncoloured vertices $v_i, v_j, v_l$ such that the edges $v_i v_j$, $v_i v_l$, $v_j v_l$ and $v_k v_l$ are uncoloured, where $\{v_k,v_l\} = V \del \{v_i, v_j, v_l\}$.  
\end{claim}

\begin{subproof}
Suppose for a contradiction that such vertices exist. Then $\{b_1, c_1, x_i, x_j, x_l\}$ is an induced $I_5$-restriction. 
\end{subproof}

\begin{claim}
There are no triangles in the graph where each edge is coloured with a distinct colour.  
\end{claim}

\begin{subproof}
Suppose not, so that, without loss of generality, $v_1 v_2$ is coloured $a$, $v_2 v_3$ is coloured $b$, $v_1 v_3$ is coloured $c$. Then $\{b_1+c_1+x_1+x_2, b_1+x_2+x_3, c_1+x_1+x_3\}$ is a triangle.
\end{subproof}

Through a case analysis, we find that there is no valid colouring of the vertices and edges of the complete graph under these constraints. The case analysis is omitted as it is routine.
\end{proof}

Combining Lemma~\ref{c5contraction}, Lemma~\ref{apex_of_1} and Theorem~\ref{I3free_extremal} yields the following result, which shows that the smallest $I_5$-free, triangle-free matroids are affine. 

\begin{lemma}\label{extremal_examples_are_affine}
Let $M=(E,G)$ be an $r$-dimensional, full-rank, $I_5$-free and triangle-free matroid with an induced $C_5$-restriction on a flat $F$, so that $M | F \cong C_5$. Then, $|E| \geq 2^{\lfloor r/2 \rfloor -1 } + 2^{\lceil r/2 \rceil -1 }$. Moreover, when $r \geq 6$, this is a strict inequality.
\end{lemma}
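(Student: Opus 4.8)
The plan is to exploit the contraction $M/F$, which by Lemma~\ref{c5contraction} is $I_5$-free, triangle-free, and crucially $I_3$-free. Write $M|F \cong C_5$, so $F$ is a $4$-dimensional flat with $|E \cap F| = 5$, and let $N = M/F$, a matroid of dimension $r-4$. By Theorem~\ref{I3free_extremal}, $|N| \geq 2^{\lfloor (r-4)/2 \rfloor} + 2^{\lceil (r-4)/2 \rceil} - 2$. The ground set of $N$ is exactly the set of non-empty cosets of $F$, so there are at least $2^{\lfloor (r-4)/2 \rfloor} + 2^{\lceil (r-4)/2 \rceil} - 2$ non-empty cosets, each contributing at least one element of $E$, plus the $5$ elements in $F$ itself. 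This already gives a bound, but it is not strong enough: we need to extract extra elements from the cosets, and this is where Lemma~\ref{apex_of_1} enters.

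The key combinatorial idea is to organize the non-empty cosets by the additive structure. Fix a system of representatives; the non-empty cosets together with $F$ form (the non-empty part of) a subset of a projective geometry $G/F$ of dimension $r-4$, and the addition $B + C = A$ among cosets is the addition in $G/F$. Lemma~\ref{apex_of_1} says: whenever $A$ is a non-empty coset with $|A \cap E| = 1$ and $A = B + C$ with $B, C$ non-empty and $|B\cap E| \le |C \cap E| \le 2$, then in fact $|C \cap E| \ge 3$ — a contradiction. Restated: if $A = B+C$ with all three non-empty and $|A \cap E| = 1$, then $\max(|B\cap E|, |C\cap E|) \ge 3$. I would call a coset \emph{light} if $|A \cap E| \le 2$ and \emph{heavy} otherwise. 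The lemma forbids a "light $+$ light $=$ singleton" triple. The plan is to do a weighting/counting argument: each heavy coset contributes $\ge 3$, each light non-singleton contributes $2$, each singleton contributes $1$; I want to show the total is at least $2 \cdot (2^{\lfloor (r-4)/2\rfloor} + 2^{\lceil (r-4)/2\rceil} - 2)$, i.e. on average $2$ per non-empty coset, after which adding the $5$ from $F$ and checking $5 + 2(2^{\lfloor (r-4)/2\rfloor} + 2^{\lceil (r-4)/2\rceil} - 2) \ge 2^{\lfloor r/2 \rfloor - 1} + 2^{\lceil r/2 \rceil - 1}$ finishes the non-strict bound, with slack that becomes strict once $r \ge 6$.

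To run the averaging I would charge each singleton coset $A$ to a non-light partner. Since $N = M/F$ is $I_3$-free, the non-empty cosets are quite dense in $G/F$ — in particular, for a flat of dimension at least $2$, an $I_3$-free (indeed, claw-free in the relevant span) matroid has every line meeting the ground set in either $1$ or $3$ points appropriately, so a singleton coset $A$ lies on many lines $\{A, B, C\}$ with $B, C$ non-empty; by Lemma~\ref{apex_of_1} at least one of $B, C$ is heavy (contributes $\ge 3$), giving a surplus of $\ge 1$ to redistribute to $A$ (which is deficient by $1$). The bookkeeping must ensure no heavy coset is over-charged; here I would use that each heavy coset $C$ of the relevant form lies on at most one line through a given other coset, and count lines through singletons carefully, or alternatively argue by a global sum $\sum_A (|A \cap E| - 2) \ge 0$ using the $I_3$-free line structure to pair up $+1$'s (heavies) against $-1$'s (singletons). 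The strictness for $r \ge 6$ comes either from the $-2$ term in Theorem~\ref{I3free_extremal} combined with the $+5$ from $F$ (note $5 > 2 \cdot 2 = 4$ when $r - 4 \le$ small, giving genuine slack), or from the observation that equality in the $I_3$-free bound forces $N$ to be highly structured in a way incompatible with having all the forced heavy cosets.

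The main obstacle is the redistribution/charging step: making the averaging argument actually yield $2$ per non-empty coset rather than something weaker, while handling the low-dimensional cases $r \in \{5, 6, 7\}$ (where $G/F$ has dimension $1, 2, 3$ and the $I_3$-free density lemmas degenerate) separately by hand. I expect the clean formulation to be a single inequality $|E| \ge 5 + \sum_{A} |A \cap E| \ge 5 + 2|N| + (\text{number of heavy cosets} - \text{number of singleton cosets})$ together with a structural claim, forced by $I_3$-freeness and Lemma~\ref{apex_of_1}, that heavies are at least as numerous as singletons.
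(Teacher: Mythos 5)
Your overall strategy (contract $F$, invoke Lemma \ref{c5contraction} and Theorem \ref{I3free_extremal}, then boost the per-coset count via Lemma \ref{apex_of_1}) is the same as the paper's, but the crucial counting step is both missing and aimed at a claim that is too strong. You want an average of $2$ elements per non-empty coset, i.e.\ $|E\setminus F|\geq 2|M/F|$, equivalently that heavy cosets (size $\geq 3$) are at least as numerous as singleton cosets. Lemma \ref{apex_of_1} does not yield this: applied to the lines of the quotient through a fixed singleton coset $A$ it only shows that every non-heavy non-empty coset other than $A$ shares its line through $A$ with a heavy coset, which gives (number of heavies) $\geq$ (number of singletons) $-1$, and this deficit of one is genuinely possible --- a line of coset sizes $(1,1,3)$ is fully compatible with the lemma, and for $r=5$ the matroid consisting of $C_5$ together with one generic point outside its span meets the bound with a single non-empty coset of size $1$, so your intended inequality $|E|\geq 5+2|M/F|$ is false exactly where the statement ceases to be strict. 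Your fallback ideas for the charging (``count lines through singletons carefully'', ``pair up $+1$'s against $-1$'s'') are not carried out, and the interactions you would have to control (lines through different singletons, overcharged heavies) are precisely where the difficulty sits; you also defer $r\in\{5,6,7\}$ to a hand check, but $r=6$ is exactly where strictness must be established and your sketch offers no argument there. Two smaller inaccuracies: Lemma \ref{c5contraction} gives only $I_3$-freeness of $M/F$ (the quotient need not be triangle-free), and your restatement of Lemma \ref{apex_of_1} drops the case where one of the two cosets summing to $A$ is empty, which is needed below.

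The paper's proof avoids global charging entirely by working only with lines through one singleton. If every non-empty coset has at least $2$ elements the bound follows immediately with slack; otherwise fix a singleton coset $A$ and split the remaining non-empty cosets into $T_1$ (those $B$ with $A+B$ non-empty) and $T_0$ (those with $A+B$ empty). Lemma \ref{apex_of_1} gives $|B\cap E|+|(A+B)\cap E|\geq 4$ for $B\in T_1$ and $|B\cap E|\geq 3$ for $B\in T_0$, hence $|E|\geq 6+2(|M/F|-1)\geq 2^{\lfloor r/2\rfloor-1}+2^{\lceil r/2\rceil-1}$: the deficit of one element at $A$ is tolerated, not eliminated. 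Strictness for $r\geq 6$ is then a parity argument: equality forces $T_0=\varnothing$ and $|T_1|=2^{\lfloor (r-4)/2\rfloor}+2^{\lceil (r-4)/2\rceil}-3$, which is odd for $r\geq 6$, whereas $T_1$ is a union of pairs $\{B,A+B\}$ and hence has even size. You would need to supply either this pairing argument or a correct substitute for your averaging claim before your proposal constitutes a proof.
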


\begin{proof}
By Lemma~\ref{c5contraction}, $M / F$ is $I_3$-free. It is also full-rank; otherwise, $M$ is rank-deficient. By Theorem~\ref{I3free_extremal}, this means $|M / F| \geq 2^{\lfloor r-4/2 \rfloor} + 2^{\lceil r-4/2 \rceil}-2$. We may assume that there exists some non-empty coset of $F$ that contains exactly $1$ element; otherwise, 
\begin{align*}
	|E| \geq |M | F| + 2 |M/F| &= 5+ 2(2^{\lfloor r-4/2 \rfloor} + 2^{\lceil r-4/2 \rceil}-2)  \\
	&= 1 + 2^{\lfloor r/2 \rfloor -1 } + 2^{\lceil r/2 \rceil -1 } \\
	&> 2^{\lfloor r/2 \rfloor -1 } + 2^{\lceil r/2 \rceil -1 }.
\end{align*}
which already satisfies the lemma.

Let $A$ be a coset of $F$ for which $|A \cap E|=1$. Let $\cC$ be the set of cosets of $F$. We will partition the cosets of $F$ as follows by $T_1$ and $T_0$. 
$$T_ 1 = \{B \mid B \neq A, |B \cap E| > 0, |(A+B) \cap E| > 0 \}$$
$$T_0 = \{B \mid B \neq A, |B \cap E| > 0, |(A+B) \cap E| = 0\}.$$
By Lemma~\ref{apex_of_1} , we have that if $B \in T_1$, then $|B \cap E| + |(A+B) \cap E| \geq 4$. Therefore

$$ \sum_{B  \in T_1} |B \cap E|  = \frac{1}{2} \sum_{B  \in T_1}( |B \cap E| + |(A+B) \cap E|)  \geq  2 |T_1|.$$

By Lemma~\ref{apex_of_1} again, for every $B \in T_0$, we have that $|B \cap E| \geq 3$. 

$$\sum_{B \in T_0} |B  \cap E|  \geq 3 |T_0|$$

Therefore we have

	\begin{align*}
		|E|  &= |M | F| + \sum_{B \in \cC} |B \cap E|  \\
				 &= |M | F| + |A \cap E| +  \sum_{B \in T_1} | B \cap E| + \sum_{B \in T_0}|B \cap E| \\
				 & \geq 5 + 1 + 2 |T_1| + 3 |T_0| \\
				 & \geq 6 + 2 (|T_1|+ |T_0|) \\
				 & = 6 + 2 (|M / F| - 1) \\
				 & \geq 6 + 2 (2^{\lfloor (r-4)/2 \rfloor} + 2^{\lceil (r-4)/2 \rceil}-3) \\
				 & = 2^{\lfloor r/2 \rfloor -1 } + 2^{\lceil r/2 \rceil - 1},
	\end{align*}

This proves the bound. Moreover, note that equality can only be achieved when $T_0$=0 and $|M / F| = 2^{\lfloor (r-4)/2 \rfloor} + 2^{\lceil (r-4)/2 \rceil}-2$, which implies $|T_1| = 2^{\lfloor (r-4)/2 \rfloor} + 2^{\lceil (r-4)/2 \rceil}-3$. But note that $T_1$ is always even by definition, which is a contradiction, unless $r=5$. This completes the proof.
\end{proof}

\section{The contraction of a maximum affine geometry}
In the previous section, we showed that the smallest matroids with no $I_5$-restriction or triangle must be affine when $r \geq 6$. This means that such matroids are $C_5$-free. The goal of this section therefore is to study the matroids that are $I_5$-free, triangle-free and $C_5$-free. The trick is to start with a maximum affine geometry of $M$ and consider its contraction. 

We divide this section into three subsections. Suppose this maximum affine geometry is induced on a flat $F$. In the first subsection, we show that $M / F$ is close to being $I_3$-free, in the sense that the only obstruction is a matroid we refer to as a \emph{doubled kite of order $k$}. In the second subsection, we show that $M / F$ is always $2T$-free ($2T$ is a six-dimensional matroid consisting of two disjoint triangles). In the third subsection, we determine the smallest $I_3$-free and $2T$-free matroids. Combining these pieces in the next section will yield our result.

\subsection{$I_3$-freeness and kites}

\subsection*{Doubled kites} A \emph{kite} is a $6$-dimensional matroid $(E,G)$ with $10$ elements with a basis $B=\{x_1,x_2,x_3,x_4,x_5,x_6\}$ such that $E = B \cup \{x_1+x_2+x_3, x_2+x_3+x_4, x_1+x_3+x_5,x_1+x_2+x_6\}$. A \emph{doubled kite of order $k$} is a $(k+6)$-dimensional matroid obtained by a sequence of $k$ doublings of a kite. When we do not wish to specify the number of doublings, we simply say that it is a \emph{doubled kite}. 

\begin{lemma}\label{contraction_max_ag}
Let $M | F$ be an affine geometry of an $I_5$-free, triangle-free and $C_5$-free matroid of dimension at least $3$. Suppose $M / F$ contains an induced $I_3$-restriction, represented by three non-empty cosets $A_1, A_2,A_3$ of $F$ in $M$. Then either $M \rvert \cl(F \cup A_i)$ is an affine geometry for some $i$, or $M | \cl(F \cup A_1 \cup A_2 \cup A_3)$ is a doubled kite of order $\dim(F)-3$.
\end{lemma}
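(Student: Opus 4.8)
The plan is to analyze the structure of $M$ restricted to the span of $F$ together with the cosets $A_1, A_2, A_3$, using the same "colour the $K_5$" bookkeeping device as in Lemma~\ref{c5contraction} but adapted to the situation where $M|F$ is an affine geometry rather than a $C_5$. First I would set up coordinates: pick a flat $F_0 \subseteq F$ with $\dim F_0 = 3$ and $M|F_0$ a claw (a basis of a $3$-dimensional flat), sitting inside $F$; since $M|F$ is an affine geometry of dimension $d := \dim F$, we have $M|F = D^{d-3}(M|F_0)$, i.e.\ $F$ is obtained from $F_0$ by $(d-3)$ doublings, with apex a $(d-3)$-flat $D_F$ disjoint from $F_0$. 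The idea is to reduce everything to the rank-$3$ "core" $F_0$ and then show the $k = d-3$ doublings propagate coherently.

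The heart of the matter is the rank-$3$ case $d = 3$, so I would do that first. Here $F = F_0$ is a claw with three elements $x_1, x_2, x_3$ (plus $0$), and $A_1, A_2, A_3$ are three non-empty cosets of $F$ forming an induced $I_3$ in $M/F$; fix $a_i \in A_i \cap E$. For each $i$ and each nonempty subset $S \subseteq \{1,2,3\}$, whether $a_i + \sum_{j\in S} x_j \in E$ is one bit of data, and the constraints are: (i) $\{a_1,a_2,a_3\}$ being an induced $I_3$ means no $a_i + a_j$, no $a_1+a_2+a_3$, and no $a_i + a_j + (\text{element of }F)$ lie in $E$; (ii) triangle-freeness forbids, for instance, $(a_i + u) + (a_i + v)$ being "completed" by an element of $F$ when $u,v \in \vs{F}$, which since $u+v \in \vs{F}$ is automatic, but more usefully forbids configurations like $(a_i+u)+(a_j+v) \in \vs{F} + a_i + a_j$ — wait, the real triangle constraints couple the three cosets; (iii) $I_5$-freeness forbids any $5$-element independent subset of $\cl(F \cup A_1 \cup A_2 \cup A_3)$ from being a flat. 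The claim is that after eliminating configurations where some $M|\cl(F\cup A_i)$ is already affine (equivalently, $A_i$ contributes only "shifted" copies giving an affine geometry on the rank-$4$ flat $\cl(F \cup A_i)$), the only surviving configuration is, up to the obvious symmetry, the kite: $E \cap \cl(F\cup A_1\cup A_2\cup A_3)$ consists of $x_1,x_2,x_3$ together with $a_1, a_2, a_3$ (renamed $x_4, x_5, x_6$) and exactly the four extra points $x_1+x_2+x_3$, $x_2+x_3+x_4$, $x_1+x_3+x_5$, $x_1+x_2+x_6$. I would verify this is consistent (it is $I_5$-free, triangle-free, $C_5$-free by direct inspection, which the paper tells us we may assert with enough preamble) and then run the case analysis showing nothing else survives. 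Concretely: $C_5$-freeness plus $M|F$ affine forces each $A_i$ to be "large enough" or affine-compatible, and the three pairwise interactions are pinned down by triangles; the bit $x_1+x_2+x_3 \in E$ must hold (else $\{x_1,x_2,x_3,a_1,a_2\}$ or similar is an $I_5$, using that $F$ is a claw so $x_1+x_2, x_1+x_3, x_2+x_3 \notin E$ — actually $M|F$ affine of dim $3$ is exactly $\AG(2,2)$, which has $4$ elements, so $x_1+x_2+x_3 \in E$ automatically; good, that anchors the kite's apex-completion).

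For the general case $d \geq 4$, I would argue that the doubling structure on $F$ forces a doubling structure on the whole configuration. Let $D_F$ be the apex $(d-3)$-flat so that $E \cap F = \vs{D_F} + (E \cap F_0)$. The key sublemma is: for each $i$, the coset structure of $A_i$ over $F$ refines compatibly with $D_F$, i.e.\ $E \cap A_i = \vs{D_F} + (E \cap A_i^0)$ where $A_i^0$ is the corresponding coset of $F_0$ inside $A_i$ — this should follow because if some translate by an element of $D_F$ broke the pattern, we could find a small non-affine, non-kite induced restriction inside a rank-$\le 6$ flat (here is where the bulk of the casework lives, and it is the step I expect to be the main obstacle, since one must rule out "twisted" doublings without the convenience of $M/F$ being genuinely $I_3$-free). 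Granting the sublemma, $M|\cl(F \cup A_1 \cup A_2 \cup A_3) = D^{d-3}$ of the rank-$6$ configuration analyzed above, so it is either (a doubling of an affine geometry, hence affine, giving the first alternative for the relevant $i$) or $D^{d-3}(\text{kite})$, which is precisely a doubled kite of order $d - 3 = \dim(F) - 3$, giving the second alternative. I would close by noting that the "affine" alternative for a single $A_i$ is exactly $M|\cl(F \cup A_i)$ being an affine geometry, matching the statement. The main obstacle, to reiterate, is the rank-$\le 6$ case analysis — both pinning down the kite in rank $3$ and the doubling-compatibility sublemma — but both are finite checks of the type the paper has already signalled it will carry out by asserting induced restrictions exist with enough surrounding data to verify them.
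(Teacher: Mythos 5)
Your proposal defers exactly the two steps that carry all the content, and one of them is false in the form you state it. The \emph{doubling-compatibility sublemma} fixes a decomposition $E\cap F=\vs{D_F}+(E\cap F_0)$ with an arbitrary $(d-3)$-flat $D_F$ chosen before looking at $A_1,A_2,A_3$, and then asks for $E\cap A_i=\vs{D_F}+(E\cap A_i^0)$. But in the doubled kite itself, $E\cap A_i$ is a single coset $a_i+\vs{W_i}$ of a hyperplane $W_i$ of the empty hyperplane $W$ of $F$, and an identity $\vs{D_F}+S=a_i+\vs{W_i}$ with $\varnothing\neq S\subseteq a_i+\vs{W_i}$ forces $D_F\subseteq W_i$ (add any $u\in D_F$ to any $s\in S$ and note the sum must stay in $a_i+\vs{W_i}$). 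So the sublemma can hold only if $D_F\subseteq W_1\cap W_2\cap W_3$, i.e.\ only if $D_F$ is the one specific codimension-$2$ flat of $W$ that is the kite's apex. Since \emph{every} $(d-3)$-flat of $W$ (together with a suitable $F_0$) exhibits $M|F$ as a $(d-3)$-fold doubling of $\AG(2,2)$, an arbitrary pre-chosen $D_F$ will generally violate the sublemma even when the conclusion of the lemma is true. The apex therefore cannot be fixed in advance; it has to be extracted from how $E\cap F$ interacts with the three cosets, and your proposal contains no mechanism for doing that.

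That extraction is precisely the paper's proof, and it is the missing idea: assuming no $M|\cl(F\cup A_i)$ is affine, one considers $\Stab(A_i\cap E)=\{w\in W\mid w+(A_i\cap E)=A_i\cap E\}$. An $I_5$-argument using two points $z,z+w$ of the affine geometry shows $W=\bigcup_{i=1}^{3}\Stab(A_i\cap E)$; elementary counting for a cover of $W$ by three proper flats then shows each stabiliser is a hyperplane of $W$ and that the three meet in a codimension-$2$ flat $W'$; triangle-freeness (together with $F\setminus W\subseteq E$) pins each $A_i\cap E$ down to a single coset of $\vs{\Stab(A_i\cap E)}$, and the doubled kite with apex $W'$ is read off directly, uniformly in $\dim F$, with no induction on doublings and no separate rank-$3$ analysis. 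Without this (or an equivalent identification of the stabilisers), your base case is also only an assertion: ``run the case analysis'' over a $6$-dimensional ambient space is a check over subsets of a $63$-point geometry and needs a structuring idea to be a proof. As it stands, both key steps of your plan are open, and the second is unfixable without changing how $D_F$ is chosen.
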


\begin{proof}
Let $W$ be the hyperplane of $F$ for which $E \cap W = \varnothing$. Suppose that $M \rvert \cl(F \cup F_i)$ is not an affine geometry for any $i=1,2,3$. For each $i$, let $\Stab(A_i \cap E) = \{w \in W | w + A_i \cap E= A_i \cap E\}$. Note that $\Stab(A_i \cap E)$ is a flat of $W$. 

\begin{claim}\label{stab_covering}
$W = \bigcup\limits_{i=1}^{3} \Stab(A_i \cap E)$.
\end{claim}

\begin{subproof}
Let $w \in W$. If $w \notin \cup_{i=1}^{3} \Stab(A_i\cap E)$, then there exists $a_i \in A_i \cap E$ for $i=1,2,3$ for which $w + a_i \notin A_i \cap E$. Pick any $z \in E \cap F$. Then $\{z, z+w, a_1, a_2, a_3\}$ is an induced $I_5$-restriction.  
\end{subproof}

\begin{claim}
For $i=1,2,3$, $\Stab(A_i \cap E)$ is a hyperplane of $W$. Moreover, the three hyperplanes intersect at a codimension-$2$ flat of $W$.
\end{claim}

\begin{subproof}
Since $M \rvert \cl(F \cup F_i)$ is not an affine geometry for any $i=1,2,3$, it follows that $\Stab(A_i \cap E)$ is a strict subspace of $W$. Let $W_i = \Stab(A_i \cap E)$.

Suppose for a contradiction that the $W_i$'s are not all distinct. Without loss of generality assume that $W_1=W_2$. Then
$$|\bigcup\limits_{i=1}^{3} \Stab(A_i \cap E)| \leq 2^{\dim(W_1)} + 2^{\dim(W_3)}-2 \leq 2 \cdot 2^{\dim(W)-1}-2 < |W|$$
This is a contradiction to ~\ref{stab_covering}. Hence the $W_i$'s are all distinct. 

We may assume that $\dim(W) > 2$, as otherwise the statement now follows trivially; if $\dim(W)=2$, then since $W_i$ is a strict subspace of $W$, it follows that $|W_i| < 2$. By \ref{stab_covering}, $|W_i| = 1$ for $i=1,2,3$ and are disjoint. 

Note that we can also use similar size considerations to deduce that the $W_i$ are hyperplanes. Suppose for a contradiction that $W_1$ is not a hyperplane. Since $W_2$ and $W_3$ do not equal $W$, $|W_2 \cup W_3|$ is at most the size of the union of two hyperplanes, so $|W_2 \cup W_3| \leq 2^{\dim(W)-2}-1+2^{\dim(W)-1}$. But then $|W_1 \cup W_2 \cup W_3| \leq |W_1| + |W_2 \cup W_3| \leq 2^{\dim(W)-2} - 1 + 2^{\dim(W)-2}-1+2^{\dim(W)-1} = 2^{\dim(W)}-2 < |W|$, a contradiction. 

Now, suppose that $W_1$ and $W_2$ intersect at a codimension-$2$ flat $W'$ of $W$. Then $W_3$ either contains $W'$, in which case we are done, or $\dim(W_3 \cap W') = \dim(W)-3$. If $\dim(W_3 \cap W') = \dim(W)-3$, let $W'' = W_3 \cap W'$, and $W_i'' = W_i \cap W_3 \del W''$ for $i=1,2$. But we may take $w_i \in W_i \del (W' \cup W_i'')$ for $i=1,2$, and $z \in W' \del W''$. But then $w_1+w_2  + z\notin W_1 \cup W_2 \cup W_3$, a contradiction to ~\ref{stab_covering}. 
\end{subproof}

Now we claim that $M | \cl(F \cup A_1 \cup A_2 \cup A_3)$ is a doubled kite. Let $W' = W_1 \cap W_2 \cap W_3$. By the above, $W'$ has dimension $\dim(W)-2$. Pick a triangle $T \subseteq W \del W'$, and fix $a_i \in A_i \cap E$ for $i=1,2,3$, and $x \in F \del W$. Then $M \rvert \cl(T \cup \{a_1,a_2,a_3,x\})$ is a kite, which is doubled $\dim(W'')$ times to give $M$. This completes the proof.
\end{proof}

Lemma \ref{contraction_max_ag} tells us that if we start with an affine geometry $M \rvert F$ (with a hyperplane $W$ for which $F \cap W = \varnothing$) whose contraction gives an induced $I_3$-restriction, then either we can obtain a larger affine geometry, or we obtain a doubled kite. Moreover, when we obtain a doubled kite, then, for any three elements $a_1, a_2, a_3 \in E$ such that $\cl(a_1,a_2,a_3)$ has dimension $3$ and is disjoint from $F$, the doubled kite has the form $(F \del W) \cup \bigcup_{i=1}^{3}(a_i + \vs{W_i})$ where $W_i$ are distinct hyperplanes of $W$ with the property that $W_1 \cap W_2 \cap W_3$ is a codimension-$2$ flat of $W$.

In the proof of the main theorem, we will take a largest affine geometry, and then contract it. Our hope is that the contraction is $I_3$-free, as we understand $I_3$-free matroids well. But Lemma~\ref{contraction_max_ag} implies that such the contraction of a largest affine geometry could lead to a doubled kite. It turns out, however, that the existence of a doubled kite is enough to conclude that the matroid contains too many elements.

Note that a doubled kite of order $k$ contains the $(k+1)$-doubling of a claw, and the $k$-doubling of an induced $C_6$-restriction.

\begin{lemma}\label{kites_not_extremal}
For an $I_5$-free, $C_5$-free and triangle-free matroid $M = (E,G)$, let $k+3$ be the dimension of a maximum affine geometry contained in $M$ where $k \geq 0$. If $M$ contains an induced $D^{k+1}(I_3)$-restriction, then its contraction is $I_3$-free.
\end{lemma}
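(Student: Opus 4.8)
The plan is to argue by contradiction. Fix a flat $K$ with $M\rvert K\cong D^{k+1}(I_3)$; then there are disjoint flats $C,D\subseteq K$ with $\dim C=3$, $\dim D=k+1$, $\cl(C\cup D)=K$, and $E\cap K=\vs{D}+\{a_1,a_2,a_3\}=\bigcup_{j=1}^3(a_j+\vs{D})$, where $\{a_1,a_2,a_3\}=E\cap C$ is a basis of $C$ (a claw). Suppose $M/K$ has an induced $I_3$-restriction, on non-empty cosets $P_1,P_2,P_3$ of $K$; choose $p_i\in P_i\cap E$ and put $S_i=\{w\in\vs{K}:p_i+w\in E\}$, so $P_i\cap E=p_i+S_i$ and $0\in S_i$. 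In the quotient $\vs{K}/\vs{D}\cong\vs{C}$ the set $E\cap K$ maps onto the basis $\{\wb{a_1},\wb{a_2},\wb{a_3}\}$. First I would extract structure from triangle-freeness: if $w,w'\in S_i$ are distinct with $w+w'\in E$ then $\{p_i+w,\,p_i+w',\,w+w'\}$ is a triangle, so $S_i+S_i$ is disjoint from $E$. In the quotient this says $\wb{S_i}+\wb{S_i}$ avoids $\{\wb{a_1},\wb{a_2},\wb{a_3}\}$, and a short check gives that $\wb{S_i}$ lies in the even subspace $R:=\{\wb 0,\wb{a_1}+\wb{a_2},\wb{a_1}+\wb{a_3},\wb{a_2}+\wb{a_3}\}$ or else $\wb{S_i}=\{\wb 0,\wb{a_1}+\wb{a_2}+\wb{a_3}\}$. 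In particular $\wb{a_j}\notin\wb{S_i}$, i.e. $p_i+a_j\notin E$ for all $i,j$.

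Next I would hunt for an induced $I_5$-restriction of the form $\{p_1,p_2,p_3,q_1,q_2\}$ with $q_1\in a_j+\vs{D}$ and $q_2\in a_l+\vs{D}$, $j\neq l$ (so $q_1,q_2\in E\cap K$). Since $P_1,P_2,P_3$ is an induced $I_3$ of $M/K$ (every sum of two or more of the $p_i$ lies in an empty coset of $K$), since $p_i+a_j\notin E$, and since $q_1+q_2\in(a_j+a_l)+\vs{D}$ avoids $E\cap K$, this set is an induced $I_5$ as soon as $q_1+q_2\notin S_1\cup S_2\cup S_3$. Because $j,l$ and the value of $q_1+q_2$ within $(a_j+a_l)+\vs{D}$ are all free, we may assume henceforth that $Y:=\bigcup_{j<l}\big((a_j+a_l)+\vs{D}\big)\subseteq S_1\cup S_2\cup S_3$. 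Here $Y=\vs{K'}\setminus\vs{D}$, where $K'=\cl(D\cup\{a_1+a_2,a_1+a_3\})$ is the $(k+3)$-flat lying over $R$, and $E\cap K'=\varnothing$.

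In this remaining case, any $S_i$ meeting $Y$ has $\wb{S_i}\subseteq R$, hence $S_i\subseteq\vs{K'}$ and $S_i\cap E=\varnothing$; so $M\rvert\cl(\{p_i\}\cup K')$ is affine, all of its points lying in the single coset $p_i+\vs{K'}$. The key step is to show each such $S_i$ is a \emph{subspace} of $\vs{K'}$: if not, choose $w_1,w_2\in S_i$ with $w_1+w_2\notin S_i$; then $\{p_i+w_1,\,p_i+w_2,\,p_i\}$ is an induced $I_3$, which one extends to an induced $I_5$ by adjoining the other two $p$'s (or suitable claw elements), the point being that every new sum except $p_l+w_1$, $p_l+w_2$, $p_l+(w_1+w_2)$ falls into an empty coset of $K$; eliminating the configurations in which this extension fails is a finite case analysis, and is where the bulk of the work lies. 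Once $S_i$ is a subspace, $M\rvert\cl(\{p_i\}\cup S_i)$ is a full affine geometry of dimension $\dim S_i+1$, so maximality of the affine geometry in $M$ forces $\dim S_i\le k+2$. A subspace of $\vs{K'}$ of dimension at most $k+2$ contains a whole coset of $\vs{D}$ inside $Y$ only if it equals some $\langle D,a_j+a_l\rangle$; chasing how $S_1,S_2,S_3$ can cover the three cosets making up $Y$ then pins one of them, say $S_1$, to $\langle D,a_1+a_2\rangle$.

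Finally, with $S_1=\langle D,a_1+a_2\rangle$ one computes
\[
E\cap\cl(\{p_1,a_1\}\cup S_1)=(p_1+\vs{S_1})\cup(a_1+\vs{S_1}),
\]
because $a_1+\vs{S_1}=(a_1+\vs{D})\cup(a_2+\vs{D})\subseteq E$ while $\vs{S_1}$ and $(p_1+a_1)+\vs{S_1}$ are disjoint from $E$. These two cosets form the complement of the hyperplane $\cl(S_1\cup\{p_1+a_1\})$ of $\cl(\{p_1,a_1\}\cup S_1)$, so $M\rvert\cl(\{p_1,a_1\}\cup S_1)$ is an affine geometry of dimension $k+4$, contradicting the hypothesis that the largest affine geometry in $M$ has dimension $k+3$. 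The delicate point throughout is the case $Y\subseteq S_1\cup S_2\cup S_3$: triangle-freeness only controls $\wb{S_i}$, not $S_i$, so forcing each $S_i$ to be a subspace (and then locating it exactly) is the main obstacle, and it is there that a careful extension of $\{p_i+w_1,p_i+w_2,p_i\}$ to an induced $I_5$, tracking the cross-sums $p_l+w_1,\,p_l+w_2,\,p_l+(w_1+w_2)$ for $l\neq i$, is needed.
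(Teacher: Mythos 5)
Your setup and the first reduction are sound: the dichotomy for $\wb{S_i}$ from triangle-freeness is correct, and the argument that $\{p_1,p_2,p_3,q_1,q_2\}$ is an induced $I_5$ unless $Y\subseteq S_1\cup S_2\cup S_3$ checks out. But the two steps that carry all the weight afterwards are not actually proved, and one of them is false as justified. (a) The claim that each $S_i$ meeting $Y$ is a subspace is deferred to ``a finite case analysis'' that is never carried out; the failure configurations (each $l\neq i$ having $S_l\cap\{w_1,w_2,w_1+w_2\}\neq\varnothing$, and, if you instead adjoin a claw element $q\in a_j+\vs{D}$, new sums such as $w_1+w_2+q$ which can land inside $E\cap K$) are exactly where the difficulty lives. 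This is precisely the content the paper imports from Lemma~\ref{contraction_max_ag}: applying that lemma to the maximum affine geometry $\cl(D\cup\{a_1,a_2\})$ shows that every point of $(a_1+a_2)+\vs{D}$ lies in exactly one or in all of the $S_i$; the ``exactly one'' case gives an $I_5$ using $a_3$ as fifth element, so the whole coset lies in $S_1\cap S_2\cap S_3$, and the codimension-$2$ intersection prescribed by the doubled-kite structure is then too small to contain it. That route needs no claim that any individual $S_i$ is a subspace, and your proposal never invokes Lemma~\ref{contraction_max_ag}, so you are in effect re-deriving its structure by hand at the point where you stop.

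(b) Even granting that each relevant $S_i$ is a subspace of dimension at most $k+2$, the ``pinning'' step does not follow from what you use (subspaces of bounded dimension covering $Y$). For $k=0$ write $\vs{D}=\{0,d\}$, $e=a_1+a_2$, $f=a_1+a_3$; then $S_1=\{0,e,f+d,e+f+d\}$, $S_2=\{0,f,e+d,e+f+d\}$, $S_3=\{0,e+f\}$ are subspaces of $\vs{K'}$ of dimension at most $k+2$, they cover $Y$, each has $\wb{S_i}\subseteq R$, yet none equals $\langle D,a_j+a_l\rangle$ (none even contains $d$). So forcing some $S_i$ to be $\langle D,a_j+a_l\rangle$ requires genuinely additional cross-constraints among $S_1,S_2,S_3$ coming from $I_5$-freeness (which is what the doubled-kite conclusion of Lemma~\ref{contraction_max_ag} encodes), not just the covering condition. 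Your final computation, producing a $(k+4)$-dimensional affine geometry from $S_1=\langle D,a_1+a_2\rangle$, is fine, but as written the proof has a genuine gap at both (a) and (b).
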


\begin{proof}

Let $D$ be a $(k+1)$-dimensional flat of $G$ and let $F$ be a flat such that $M | F \cong I_3$ and $E \cap \cl(D \cup F) = \vs{D} + (E \cap F)$. Let $F' =\cl (D \cup F)$, so that $M \rvert F'$ is a $(k+1)$-doubling of a claw. Write $F \cap E = \{x_1, x_2, x_3\}$.

Suppose for a contradiction that $M/F'$ contains a claw, and pick an element $y_i \in E$ from each of these three cosets corresponding to the claw. For $i=1,2,3$, let $U_i = \{v \in F' \del E \mid y_i + v \in E \}$. 

\begin{claim}
$U_i \cap (x_1+x_2+x_3 + \vs{D}) = \varnothing$, for $i=1,2,3$.
\end{claim}

\begin{subproof}
If there exists $v \in U_i \cap (x_1+x_2+x_3 + \vs{D})$, then $\{y_i, v+y_i, x_1, x_2, v +x_1+x_2\}$ is an induced $C_5$-restriction. 
\end{subproof}

\begin{claim}\label{doubled_claw_all_colour_labels}
$x_1+x_2 + \vs{D} \subseteq \bigcap_{i=1}^{3}U_i$
\end{claim}

\begin{subproof}
Note that $\cl(D \cup \{x_1, x_2\})$ is a largest affine geometry of $M$ since it has dimension $k+3$. Hence by Lemma \ref{contraction_max_ag}, $M | \cl(D \cup \{x_1, x_2, y_1, y_2, y_3\})$ is a doubled kite. Hence for every point $v \in x_1+x_2 + \vs{D}$, either there exists a unique $l \in \{1,2,3\}$ for which $v \in U_l$, or $v \in U_i$ for every $i = 1,2,3$. If there exists a unique $l \in \{1,2,3\}$ for which $v \in U_l$, then $\{y_1, y_2, y_3, v + y_l, x_3\}$ is an induced $I_5$-restriction, a contradiction. Hence, $x_1+x_2 + \vs{D} \subseteq \bigcap_{1}^{3}U_i$.
\end{subproof}

This is enough to derive a contradiction. Note again that $M | \cl(D \cup \{x_1, x_2, y_1, y_2, y_3\})$ is a doubled kite. This implies in particular that $J =\cl(D \cup \{x_1, x_2\}) \cap U_1 \cap U_2 \cap U_3$ is a flat of codimension $2$ of $\cl(D \cup \{x_1+x_2\})$; note $|J| = 2^{\dim(D)-1}-1$. By \ref{doubled_claw_all_colour_labels}, we have that $x_1+x_2 + \vs{D} \subseteq J$. But $|(x_1+x_2 + \vs{D})| = 2^{\dim(D)}$, a contradiction.
\end{proof}

The next goal is to bound the number of elements in a non-empty coset of a doubled $C_6$-restriction of order $k$. Let $d_k$ denote the minimum number of elements in a non-empty coset of a doubled $C_6$-restriction of order $k$. We will provide a lower bound on $d_k$ in two steps. In the first step we will show that $d_0 \geq 4$. 

\begin{lemma}\label{kite_hyperplane}
$d_0 \geq 4$.
\end{lemma}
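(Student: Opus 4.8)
The plan is to argue by contradiction. Fix a flat $F$ with $M\rvert F\cong C_6$, write $E\cap F=\{x_1,\dots,x_6\}$ with $\sum_{i=1}^{6}x_i=0$ (so $\dim F=5$), and let $A$ be a non-empty coset of $F$; suppose $|A\cap E|\le 3$. Fix $a_1\in A\cap E$ and write $A\cap E=\{a_1,\dots,a_m\}$ with $m\le 3$, say $a_\ell=a_1+v_\ell$ with $v_\ell\in\vs{F}$ and $v_1=0$. The heart of the matter is that the $m-1\le 2$ vectors $v_2,\dots,v_m$ must ``destroy'' all $\binom{6}{4}=15$ potential induced $I_5$-restrictions of the form $\{a_1,x_{i_1},x_{i_2},x_{i_3},x_{i_4}\}$, and there are too few of them to do so.

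First I would record the structure of $\vs{F}$ relative to the circuit: any four of $x_1,\dots,x_6$ are independent and the six satisfy a single linear dependency, so every nonzero element of $\vs{F}$ equals exactly one complementary pair of subset sums $\{\sum_{i\in T}x_i,\sum_{i\notin T}x_i\}$, and these come in the types $\{|T|,6-|T|\}\in\{\{1,5\},\{2,4\},\{3,3\}\}$ — i.e.\ every nonzero element is some $x_i$ ($6$ of them), a sum $x_i+x_j=\sum_{k\notin\{i,j\}}x_k$ ($15$ of them), or a sum $x_i+x_j+x_k$ with complementary representation $\sum_{l\notin\{i,j,k\}}x_l$ ($10$ of them), accounting for all $32$ elements. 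By triangle-freeness no $v_\ell$ with $\ell\ge 2$ equals any $x_i$ (else $\{a_1,a_\ell,x_i\}$ spans a triangle), so each such $v_\ell$ is a sum of exactly two or exactly three of the $x_i$.

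Next I would establish the key observation: for every $4$-subset $S\subseteq\{1,\dots,6\}$, the set $Z_S=\{a_1\}\cup\{x_i:i\in S\}$ is an induced $I_5$-restriction unless $v_\ell=\sum_{j\in J}x_j$ for some $\ell\in\{2,\dots,m\}$ and some nonempty $J\subseteq S$. Indeed $Z_S$ is independent since $a_1\notin F$ and four of the $x_i$ are independent; any nonempty sum of at least two elements of $\{x_i:i\in S\}$ lies in $F$ and has circuit-weight $2$ or $3$, hence is not in $E\cap F$; and $a_1+\sum_{j\in J}x_j$ for nonempty $J\subseteq S$ lies in the coset $A$, so it lies in $E$ iff it equals some $a_\ell$, which (as $\sum_{j\in J}x_j\ne 0$) forces $\ell\ge 2$ and $v_\ell=\sum_{j\in J}x_j$. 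Since $M$ is $I_5$-free, all $15$ of the sets $Z_S$ fail to be induced $I_5$-restrictions, so every $4$-subset $S$ is ``covered'' by some $v_\ell$ in this sense.

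Finally I would bound how many $4$-subsets a single admissible $v_\ell$ covers. If $v_\ell=x_i+x_j$ then a nonempty $J\subseteq S$ with $\sum_{j\in J}x_j=v_\ell$ forces $J=\{i,j\}$ (needing $\{i,j\}\subseteq S$, giving $\binom{4}{2}=6$ choices) or $J=\{1,\dots,6\}\setminus\{i,j\}$ (needing $S=\{1,\dots,6\}\setminus\{i,j\}$, one more), for a total of $7$; if $v_\ell=x_i+x_j+x_k$ then $J$ is one of two triples, each forcing $3$ choices of $S$, for a total of $6$. So each $v_\ell$ covers at most $7$ of the $15$ four-subsets, and $m-1\le 2$ of them cover at most $14<15$, a contradiction. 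Hence $|A\cap E|\ge 4$, i.e.\ $d_0\ge 4$. The only slightly delicate point is the bookkeeping for which circuit subset sums land where — in particular the double representation of weight-$2$ and weight-$3$ vectors — and I expect stating this cleanly, rather than any genuine difficulty, to be the main obstacle.
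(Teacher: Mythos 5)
Your proof is correct. It is close in spirit to the paper's argument in that both revolve around the same family of candidate induced $I_5$-restrictions, namely one element of the coset together with four elements of the circuit, but the execution is genuinely different. The paper first uses triangle-freeness and $C_5$-freeness to show that the coset has at least two elements and that any two of them differ by a sum of exactly two circuit elements; fixing such a pair $v$ and $v'=v+v_1+v_2$, it then forces one further element of $E$ in each of the two disjoint cosets $v+F$ and $v'+F$ of the $4$-dimensional flat $F=\cl(v_1,v_3,v_4,v_5)$, yielding four elements. You instead run a single covering/double-counting argument: by $I_5$-freeness each of the $\binom{6}{4}=15$ candidate sets $\{a_1\}\cup\{x_i : i\in S\}$ must be destroyed by some difference vector $v_\ell$; since the circuit's only dependency is the full sum, each element of the span has exactly two complementary subset-sum representations, and triangle-freeness rules out weight $\{1,5\}$, so each admissible $v_\ell$ destroys at most $7$ of the $15$ four-subsets, and two vectors cannot destroy them all. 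Your bookkeeping of the complementary representations and the counts ($7$ for weight $\{2,4\}$, $6$ for weight $\{3,3\}$) is accurate, and the verification that the candidate sets are otherwise induced $I_5$'s is complete. One small bonus of your route is that it never invokes $C_5$-freeness, so it proves $d_0\ge 4$ for all $I_5$-free, triangle-free matroids, whereas the paper's proof uses $C_5$-freeness (harmlessly, since the lemma is only applied within that class); the price is slightly heavier case counting in place of the paper's short structural step.
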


\begin{proof}
Suppose $M = (E,G)$ is a $7$-dimensional matroid with a hyperplane $H \subseteq G$ for which $M | H \cong C_6$. Suppose $|E \del H| > 0$. Our aim is to show that $|E \del H| \geq 4$. Fix $v \in E \del H$. We note that $|E \del H| \geq 2$; otherwise $\{v, v_1, v_2, v_3, v_4\}$ is an induced $I_5$-restriction for any four elements $v_i \in E \cap H$, $i=1,2,3,4$. Because of $C_5$-freeness, $v + v_1+v_2+v_3 \notin E$ for any three distinct $v_1, v_2, v_3 \in E \cap H$. Hence we have some element $v' \in E \del H$ of the form $v' = v + v_1+ v_2$ for some two $v_1, v_2 \in E \cap H$. 

Now, consider the flat $F = \cl(v_1,v_3,v_4,v_5)$ for any three $v_3,v_4,v_5 \in E \cap H$ other than $v_2$. Now note that $M | \cl(F \cup \{v\})$ is an induced $I_5$-restriction unless $|E \cap (v + F)| \neq 0$, and similarly $M | \cl(F \cup \{v'\})$ is an induced $I_5$-restriction unless $|E \cap (v' + F)| \neq 0$. But $(v + F') \cap (v' + F) = \varnothing$. Hence $|E \del H| \geq 4$.
\end{proof}

\begin{lemma}\label{doubled_kite_hyperplane}
$d_k \geq 3 \cdot 2^{k} + 1$
\end{lemma}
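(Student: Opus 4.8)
The plan is to prove the bound $d_k \geq 3 \cdot 2^k + 1$ by induction on $k$, using Lemma~\ref{kite_hyperplane} ($d_0 \geq 4 = 3 \cdot 2^0 + 1$) as the base case. For the inductive step, suppose $M = (E,G)$ has a flat $F_0$ with $M \rvert F_0 \cong C_6$ and a $(k+1)$-dimensional flat $D$ so that $M$ restricted to $F := \cl(D \cup F_0)$ is the $(k+1)$-doubling of $C_6$; let $A$ be a non-empty coset of $F$, and our goal is $|A \cap E| \geq 3 \cdot 2^{k+1} + 1$. The natural move is to split off one direction of the doubling: pick a hyperplane $D_0$ of $D$, so that $F_1 := \cl(D_0 \cup F_0)$ carries the $k$-doubling $D^k(C_6)$, and $F = \cl(F_1 \cup \{p\})$ for $p \in D \del D_0$. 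The coset $A$ then splits as the disjoint union $A = A' \cup (A' + \vs{p}) = A' \cup A''$, where $A' := A \cap \cl(F_1 \cup A)$ is a coset of $F_1$ and $A'' = p + \vs{A'}$ is the other. Since $F_1$ is a flat on which $M$ induces a $k$-doubled $C_6$, the inductive hypothesis gives $|A' \cap E| \geq 3 \cdot 2^k + 1$ provided $A'$ is non-empty, and likewise for $A''$; so if both $A'$ and $A''$ are non-empty we immediately get $|A \cap E| \geq 2(3 \cdot 2^k + 1) = 3 \cdot 2^{k+1} + 2$, which beats the target. Hence the substantive case is when exactly one of $A', A''$ meets $E$ — say $A' \cap E \neq \varnothing$ and $A'' \cap E = \varnothing$ — and here $|A \cap E| = |A' \cap E| \geq 3 \cdot 2^k + 1$, which is only about half of what we need.

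The heart of the argument, therefore, is to rule out (or extract extra elements from) the situation where $A''\cap E=\varnothing$: the coset $A$ "lives entirely on one side" of the doubling direction $p$. The idea is that this one-sidedness interacts badly with $C_5$-freeness and $I_5$-freeness together with the rich structure ($C_6$ plus many doublings) sitting on the flat $F$. Concretely, I would fix $a \in A' \cap E$ and work inside $F_0 = \cl(\{z_1,\dots,z_5\})$ with $E \cap F_0 = \{z_1,\dots,z_6\}$ the six points of the $C_6$ (so $z_1 + \cdots + z_6 = 0$), together with the doubling flat $D$. For each $d \in \vs{D}$ and each pair $z_i, z_j$, the point $a + d + z_i + z_j$ is a candidate element of $E$; one shows via $C_5$-freeness that $a + d + z_i + z_j + z_k \notin E$ for distinct $i,j,k$ (this is the same mechanism as in Lemma~\ref{kite_hyperplane}), so the "extra" elements of $E$ near $a$ of this shape come in the $A'$-side as $a + d + z_i + z_j$. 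Using $I_5$-freeness against configurations of the form $\{a, a + d + z_i + z_j, z_k, z_l, z_m\}$ forces, for each $d$, that the pairs $\{i,j\}$ with $a + d + z_i + z_j \in E$ cover all of $\{1,\dots,6\}$ — a covering/parity constraint analogous to the star-versus-triangle analysis in Lemma~\ref{c5contraction}. Carrying this through should yield at least three elements of $E$ in $a + d + \vs{F_0}$ beyond $a$ itself for a positive proportion of the $2^{k+1}$ cosets of $F_0$ inside $F$ — in fact I expect the clean statement to be that $|A \cap E \cap (a + d + \vs{F_0})| \geq 3$ whenever this set is non-empty and $d \neq 0$, combined with the inductive bound $|A' \cap E| \geq 3\cdot 2^k + 1$ on the $D_0$-part, pushing the total over $3 \cdot 2^{k+1} + 1$.

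An alternative and perhaps cleaner route for the one-sided case: having fixed the hyperplane $D_0$ of $D$ with $A''\cap E=\varnothing$, consider instead a \emph{different} hyperplane $D_1$ of $D$ (there are several when $k\ge 1$, and $D$ has dimension $k+1 \geq 2$), inducing another splitting $A = B' \cup B''$ with $B'$ a coset of $\cl(D_1 \cup F_0)$. If for \emph{every} hyperplane of $D$ the coset $A$ is one-sided, then the non-empty part of $A$ must be contained in a single coset of $F_0$ inside $F$, i.e. $|A \cap E| \leq |F_0| = 2^5 - 1 = 31$; but $|A \cap E| \geq 3 \cdot 2^k + 1$ from induction, so for large $k$ this is already contradictory, and for the finitely many small $k$ one checks directly (using that $A \cap E$ sits inside a $C_6$-coset, hence inside a copy of $\vs{F_0}$, but with $M$ triangle-free and $I_5$-free the induced restriction $M \rvert \cl(F_0 \cup A)$ is very constrained). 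So we may assume some hyperplane $D_1$ of $D$ gives a genuinely two-sided split, and then the doubling of the inductive bound applies. I expect the main obstacle to be the base of this sub-induction, i.e. handling $k = 0$ and $k = 1$ of the "one-sided" case by hand, and getting the covering constraints from $I_5$-freeness to interact correctly with the $C_6$-relation $z_1 + \cdots + z_6 = 0$ (which introduces an extra linear dependency not present in the claw case), so that the counting is tight rather than off by a constant. The clean write-up will likely phrase the whole thing as: "either the two halves are both non-empty, and we win by doubling the inductive hypothesis; or the coset is one-sided, and then $C_5$- and $I_5$-freeness force at least $3 \cdot 2^k$ additional elements, and we win again."
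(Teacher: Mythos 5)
Your reduction to the two-sided case is fine: if both cosets of $\cl(D_0\cup F_0)$ inside $A$ meet $E$, then induction indeed gives $|A\cap E|\geq 2(3\cdot 2^k+1)$. But the one-sided case, which you yourself identify as the heart of the matter, is not actually proved in either of your two routes, and that is a genuine gap. Route 1 ends with ``carrying this through should yield\dots'' and ``I expect the clean statement to be\dots''; no configuration analysis is carried out, and the claimed conclusion (at least three extra elements in $a+d+\vs{F_0}$ whenever that slice is non-empty) is exactly the kind of statement that needs the detailed $I_5$/$C_5$/triangle-freeness case analysis of Lemma~\ref{kite_hyperplane}, now in the presence of the doubling flat. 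Route 2 correctly observes that if $A\cap E$ is one-sided for \emph{every} hyperplane of $D$ then it lies in a single coset of $F_0$ (a separating-hyperplane argument in $\vs{D}\cong\bF_2^{k+1}$), but the resulting ``contradiction'' only comes from comparing the inductive lower bound $3\cdot 2^k+1$ with the trivial upper bound $|A\cap E|\leq 2^5$ (note a coset of $F_0$ has $32$ points, not $31$), which fails for all $k\leq 3$; even sharpening the upper bound via triangle- and $C_5$-freeness (pairwise sums of elements of a single $F_0$-coset must be sums of two points of the $C_6$, giving roughly $|A\cap E|\leq 16$) still leaves the steps $d_1,d_2,d_3$ unproved. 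For those steps one must show the one-sided configuration is impossible or forces many elements, and ``one checks directly'' is not a proof: the ambient matroid is arbitrary and the relevant flats have dimension up to about $10$, so this is not a routine finite verification. In short, the induction collapses precisely where the real work is.

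For comparison, the paper avoids the half-splitting entirely: it proves a general averaging lemma for hereditary properties, double-counting pairs $(F,e)$ where $F$ ranges over the $2^{5k}$ five-dimensional flats of the hyperplane that avoid the apex $D$ (each carrying a copy of $C_6$) and $e\in(F+v)\cap E$. Applying the base bound $d_0\geq 4$ (Lemma~\ref{kite_hyperplane}) to every such copy and noting each point of $H\del D$ lies in $2^{4k}$ of these flats yields $|(H+v)\cap E|\geq 3\cdot 2^k$ at once, so $d_k\geq 3\cdot 2^k+1$ with no induction on $k$ and no one-sided case to worry about. If you want to salvage your approach, the missing ingredient is precisely an argument that a non-empty coset cannot concentrate on one side of every doubling direction -- and the cleanest such argument is essentially the paper's averaging over all apex-avoiding copies of $C_6$.
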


This will follow directly from the following by setting $\cP$ to be the family of matroids that are $I_5$-free, $C_5$-free and triangle-free, and $N = C_6$ and Lemma \ref{kite_hyperplane}. A matroid property $\cP$ is \emph{hereditary} if it is inherited under taking induced restrictions.

\begin{lemma}
Let $\cP$ be a hereditary matroid property. Suppose that $N \in \cP$ and, for each $M = (E,G) \in \cP$ having a hyperplane $H$ with $M | H \cong N$ and $|E \del H| > 0$, we have $|E \del H| > t$. Then, if $M = (E,G) \in \cP$ has $D^{k}(N)$ on a hyperplane $H$ with $|E \del H| > 0$, then $|E \del H| > t \cdot 2^{k}$.
\end{lemma}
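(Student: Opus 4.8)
The plan is to prove the statement by induction on $k$. The base case $k=0$ is exactly the hypothesis on $\cP$: if $M\in\cP$ has a hyperplane $H$ with $M|H\cong N=D^0(N)$ and $|E\setminus H|>0$, then $|E\setminus H|>t$. So assume $k\geq 1$ and that the statement holds for $k-1$.

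\textbf{Setup.} Suppose $M=(E,G)\in\cP$ has $D^k(N)$ induced on a hyperplane $H$, with $|E\setminus H|>0$. Unpacking the definition of a $k$-doubling: inside $H$ there is a flat $F_0$ with $M|F_0\cong N$, and a $k$-dimensional flat $D\subseteq H$ disjoint from $F_0$ with $\cl(D\cup F_0)=H$ and $E\cap H=\vs{D}+(E\cap F_0)$. I want to split off one doubling, so I would write $D=\cl(D_1\cup\{a\})$ where $D_1$ is a $(k-1)$-dimensional flat and $a\in D\setminus D_1$, and set $H'=\cl(D_1\cup F_0)$, a hyperplane of $H$; then $M|H'\cong D^{k-1}(N)$, and $H$ is the doubling of $M|H'$ by $a$.

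\textbf{The halving argument.} Fix any element $v\in E\setminus H$, and let $G'=\cl(H'\cup\{v\})$; this is a hyperplane of $G$ (since $H'$ has codimension $2$ in $G$ and $v\notin H\supseteq H'$, so $\dim\cl(H'\cup\{v\})=\dim H'+1=\dim G-1$), and $G'$ does not contain $a$. Consider $M|G'$. It has $M|H'\cong D^{k-1}(N)$ on the hyperplane $H'$, and $v\in (E\cap G')\setminus H'$, so $|(E\cap G')\setminus H'|>0$; since $\cP$ is hereditary, $M|G'\in\cP$. By the induction hypothesis, $|(E\cap G')\setminus H'|>t\cdot 2^{k-1}$. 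Now the key point is that $G$ is the doubling of $G'$ by $a$ at the level of ambient geometries — every coset of $G'$ is $a+\vs{G'}$ — and more importantly the two "halves'' of $E\setminus H$ one gets from $G'$ and from its translate are disjoint and both large. Concretely, $a+\vs{H'}$ is disjoint from $H'$ but contained in $H$, and I would argue that the translation map $x\mapsto x+a$ is a bijection between $(E\cap G')\setminus H$ and a subset of $(E\setminus G')\setminus H$ of the same size — actually the cleaner phrasing: $E\setminus H=\big((E\cap G')\setminus H\big)\sqcup\big((E\setminus G')\setminus H\big)$ since $G'$ is a hyperplane of $G$, and I need each of these two pieces to have size $>t\cdot 2^{k-1}$.

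\textbf{Making both halves large.} The first piece: $(E\cap G')\setminus H=(E\cap G')\setminus(H'\cup$ (stuff in $H\setminus H')$)$ = (E\cap G')\setminus H'$, because $H\cap G'=H'$ (both $H$ and $G'$ are hyperplanes of $G$ containing $H'$, and $H\neq G'$ since $v\in G'\setminus H$, so their intersection is exactly $H'$). Hence $|(E\cap G')\setminus H|=|(E\cap G')\setminus H'|>t\cdot 2^{k-1}$ by induction. For the second piece, I would apply the same induction hypothesis to the hyperplane $G''=\cl(H'\cup\{v'\})$ for a suitably chosen $v'\in (E\setminus G')\setminus H$ — such a $v'$ exists because... this is the delicate point: I need to know $(E\setminus G')\setminus H$ is nonempty to even start, equivalently that $E\setminus H$ is not contained in the single hyperplane $G'$. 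If it were, one can translate by $a$: since $H$ is the doubling of $H'$ by $a$, the structure is symmetric under $x\mapsto x+a$ up to swapping $G'$ with $G\setminus G'$, and I expect one can show directly that a translate of $v$ by an appropriate element of $H$ lands in $E\setminus G'$, or else derive a contradiction with $|E\setminus H|>0$ combined with the $k=0$ lower bound applied cleverly. Once $(E\setminus G')\setminus H\neq\varnothing$, pick $v'$ in it, form $G''$, get $M|G''\in\cP$ with $D^{k-1}(N)$ on $H'$ and a nonempty part outside, so $|(E\cap G'')\setminus H|=|(E\cap G'')\setminus H'|>t\cdot 2^{k-1}$, and $(E\cap G'')\setminus H\subseteq(E\setminus G')\setminus H$. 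Adding the two disjoint halves gives $|E\setminus H|>2\cdot t\cdot 2^{k-1}=t\cdot 2^k$, completing the induction.

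\textbf{Main obstacle.} The routine part is the dimension bookkeeping showing $G',G''$ are hyperplanes with $G'\cap H=G''\cap H=H'$. The genuine difficulty is the nonemptiness of the "second half'' $(E\setminus G')\setminus H$: a priori all of $E\setminus H$ could sit in one hyperplane $G'$ through $H'$, and I must rule this out — I expect this follows by exploiting that $H$ itself is a doubling by $a$ so the configuration $E$ is invariant in a suitable sense under $+a$, forcing the part of $E$ outside $H$ to be spread across both sides of $G'$, but pinning down the exact argument (possibly a short separate claim, or a direct application of the $k=0$ hypothesis to a cleverly chosen restriction) is where the real work lies.
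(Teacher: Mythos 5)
Your inductive setup is fine as far as it goes: splitting the apex as $D=\cl(D_1\cup\{a\})$, taking $H'=\cl(D_1\cup F_0)$, checking $E\cap H'=\vs{D_1}+(E\cap F_0)$ so that $M|H'\cong D^{k-1}(N)$, and noting that $G'=\cl(H'\cup\{v\})$ is a hyperplane of $G$ with $G'\cap H=H'$, are all correct, and the two pieces $(E\cap G')\setminus H$ and $(E\setminus G')\setminus H$ are indeed disjoint with the second containing $(E\cap G'')\setminus H$ for the third hyperplane $G''$ through $H'$. But the step you yourself flag is a genuine gap, not a routine one: you need $E\setminus H$ to meet $G''\setminus H'=a+v+\vs{H'}$, i.e.\ that $E\setminus H$ is not contained in the single hyperplane $G'$, and nothing in the hypotheses supplies this. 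The hypothesis on $\cP$ is purely a counting statement about extensions of copies of $N$; it says nothing about how the elements outside $H$ are distributed among the cosets of $H'$. The symmetry you appeal to (``$E$ is invariant in a suitable sense under $x\mapsto x+a$'') does not exist: the doubling structure constrains only $E\cap H$, and $E\setminus H$ need not respect it in any way. Since the lemma is asserted for an arbitrary hereditary property, an argument that all of $E\setminus H$ cannot sit on one side of $H'$ would have to be extracted from the counting hypothesis alone, and I see no way to do that; varying the choice of $D_1$ (there are only $2^k-1$ candidates for $H'$) does not obviously help either. Without this claim your induction only yields $|E\setminus H|>t\cdot 2^{k-1}$.

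The paper avoids the issue entirely by not inducting at all: it fixes one $v\in E\setminus H$ and double counts pairs $(F,e)$ where $F$ ranges over all $2^{k\dim N}$ flats of $H$ of dimension $\dim N$ disjoint from the apex $D$ (each such $F$ satisfies $M|F\cong N$), and $e\in(F+v)\cap E$. The hypothesis applied to $M|\cl(F\cup\{v\})$ gives at least $t$ such $e$ for each $F$, while each point of $(H\setminus D)+v$ lies in exactly $2^{k(\dim N-1)}$ of the sets $F+v$; comparing the two counts gives $|E\cap(H+v)|\geq t\cdot 2^k$, and adding $v$ finishes. In effect the paper averages over all copies of $N$ at once, which extracts the full factor $2^k$ from a single element $v$ and never needs any claim about $E\setminus H$ being spread across both sides of a hyperplane. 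If you want to salvage your halving scheme you would have to prove the spreading claim, and I do not believe it is true at this level of generality; the counting argument is the missing idea.
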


\begin{proof}
Suppose $M = (E,G)$ is such that $M$ has an induced $D^{k}(N)$-restriction on a hyperplane $H$. Note that given an $k$-dimensional flat, the number of $l$-dimensional subspaces of an $n$-dimensional space avoiding that $k$-dimensional flat is $2^{kl} {n-k \choose l}_2$ (this can be proved by a standard counting argument). 

Let $D$ be the $k$-dimensional flat apex that gives rise to the $k$-doubling of $N$. Let $\cF$ be the set of $dim(N)$-dimensional flats $F$ of $H$ such that $F \cap D = \varnothing$ (and hence $M | F \cong N$). Then $|\cF| = 2^{k \cdot \dim(N)}$.

Fix $v \in E \del H$. Let $S = \{(F,e) \mid F \in \cF, e \in (F + v) \cap E \}$. By assumption, we know that $|S| \geq t |\cF|$. We can also count this quantity in a different way, by first fixing $e \in ((H \del D) + v) \cap E$. Observe that given a point $y \in H \del D$, the number of $\dim(N)$-dimensional flats $F \in \cF$ that contain $y$ is $2^{k(\dim(N)-1)}$. Therefore, we have the following.

	\begin{align*}
		|S| &= \sum_{e \in  ((H \del D) + v) \cap E} |\{F \in \cF \mid e \in F+v\}| \\
		& = \sum_{e \in  ((H \del D) + v) \cap E} 2^{k(\dim(N)-1)}.
	\end{align*}
	
Hence it follows that $|((H \del D) + v) \cap E|  \cdot 2^{k(\dim(N)-1)} \geq t \cdot 2^{k \cdot \dim(N)}$. Therefore, $|(H + v) \cap E| \geq t \cdot 2^k$. Adding the fixed point $v$, we obtain that $|E \del H| > t \cdot 2^k $. 
\end{proof}

We now combine Lemmas \ref{kites_not_extremal} and \ref{doubled_kite_hyperplane}

\begin{lemma}\label{kite_too_many_elements}
Let $M=(E,G)$ be a full-rank, $r$-dimensional, $I_5$-free, $C_5$-free and triangle-free matroid. Let $k+3$ be the dimension of a maximum affine geometry contained in $M$ where $k \geq 0$, and suppose that $M$ contains the $k$-doubling of a kite as an induced restriction. Then $|E| > 2^{\lfloor r/2 \rfloor -1 } + 2^{\lceil r/2 \rceil -1 }$.
\end{lemma}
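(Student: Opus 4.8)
The plan is to start with the maximum affine geometry $M\rvert F$, where $\dim(F)=k+3$, and contract it, so that we can apply our knowledge of $I_3$-free matroids to $M/F$. Since $M$ contains the $k$-doubling of a kite, it in particular contains a $D^{k+1}(I_3)$-restriction (a doubled kite of order $k$ contains the $(k+1)$-doubling of a claw, as noted in the excerpt), and this $D^{k+1}(I_3)$ lives inside a flat containing $F$ up to the doubling structure. The key leverage is Lemma~\ref{kites_not_extremal}: the presence of an induced $D^{k+1}(I_3)$-restriction forces its contraction to be $I_3$-free. So first I would set up the flat $F'$ of dimension $k+4$ on which $M\rvert F'$ is the $k$-doubling of the kite (equivalently, $F'=\cl(D\cup F_0)$ where $M\rvert F_0$ is a kite and $D$ is the doubling apex of dimension $k$), observe $F\subseteq F'$ can be chosen with $M\rvert F$ the maximum affine geometry, and then apply Lemma~\ref{kites_not_extremal} to deduce that $M/F'$ is $I_3$-free.

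Next I would estimate $|E|$ by splitting it into the part on $F'$ and the part in the cosets of $F'$. On $F'$ we have exactly $|E\cap F'| = 10\cdot 2^k$ elements (the kite has $10$ elements, doubled $k$ times). For the cosets: $M/F'$ is an $(r-k-4)$-dimensional $I_3$-free matroid, so by Theorem~\ref{I3free_extremal} it has at least $2^{\lfloor (r-k-4)/2\rfloor}+2^{\lceil (r-k-4)/2\rceil}-2$ non-empty cosets. Moreover $M\rvert F'$ contains an induced $C_6$-restriction doubled $k$ times (again as noted), so by Lemma~\ref{doubled_kite_hyperplane} — applied to the flat $\cl(F''\cup A)$ for each non-empty coset $A$, where $F''$ is the $(k+5)$-dimensional flat carrying the doubled $C_6$ — every non-empty coset of $F'$ contains at least $d_k \geq 3\cdot 2^k+1$ elements. (Here I need to be a little careful: Lemma~\ref{doubled_kite_hyperplane} bounds the number of elements off a hyperplane carrying $D^k(C_6)$, so I apply it inside $\cl(F''\cup A)$ where $F''$ is a hyperplane of that flat carrying the doubled $C_6$ and $A$ sits off it.) Combining,
\begin{align*}
|E| &\geq 10\cdot 2^k + (3\cdot 2^k+1)\bigl(2^{\lfloor (r-k-4)/2\rfloor}+2^{\lceil (r-k-4)/2\rceil}-2\bigr).
\end{align*}

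Finally I would verify that the right-hand side strictly exceeds $2^{\lfloor r/2\rfloor-1}+2^{\lceil r/2\rceil-1}$ for all $k\geq 0$ and all $r\geq 2k+8$ (note $r \geq k+4$ is forced, and in fact $r\geq 2k+8$ since the argument only bites when $M/F'$ is genuinely large; I would handle small values of $r-k-4$, i.e.\ $\dim(M/F')\in\{0,1,2,3\}$, separately by hand using the crude bound $|E|\geq 10\cdot 2^k$, which already beats the target once $r$ is not too large relative to $k$). The arithmetic comparison is the routine but fiddly part: one expands $2^{\lfloor r/2\rfloor-1}+2^{\lceil r/2\rceil-1}$ as roughly $3\cdot 2^{\lceil (r-4)/2\rceil-1}$-ish times a constant and checks that the coefficient $(3\cdot 2^k+1)$ against $2^{k}\cdot(\text{something})$ wins, the ``$+1$'' and the ``$10\cdot 2^k$'' term providing the strictness. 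I expect the main obstacle to be not any single deep step but rather bookkeeping: making sure the doubled-kite flat, the maximum affine geometry $F$, the doubled-$C_6$ hyperplane $F''$, and the doubling apex $D$ are all chosen compatibly so that both Lemma~\ref{kites_not_extremal} and Lemma~\ref{doubled_kite_hyperplane} apply simultaneously, and then pushing the case analysis on the parity of $r$ and on small $\dim(M/F')$ through cleanly.
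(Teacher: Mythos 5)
Your plan is essentially the paper's proof: contract the doubled-kite flat, invoke Lemma~\ref{kites_not_extremal} to get $I_3$-freeness of the contraction, count non-empty cosets via Theorem~\ref{I3free_extremal}, bound each non-empty coset from below via the induced $D^k(C_6)$ and Lemma~\ref{doubled_kite_hyperplane}, and add up. However, there is a concrete bookkeeping error you must fix before your displayed inequality is justified: a kite is $6$-dimensional, so the $k$-doubling of a kite spans a flat $F'$ of dimension $k+6$, not $k+4$ (your own description $F'=\cl(D\cup F_0)$ already forces $k+6$). Hence $M/F'$ has dimension $r-k-6$, and Theorem~\ref{I3free_extremal} only gives at least $2^{\lfloor(r-k-6)/2\rfloor}+2^{\lceil(r-k-6)/2\rceil}-2$ non-empty cosets, fewer than you claimed. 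With this correction your estimate becomes $|E|\geq 10\cdot 2^k+(3\cdot 2^k+1)\bigl(2^{\lfloor(r-k-6)/2\rfloor}+2^{\lceil(r-k-6)/2\rceil}-2\bigr)$, which is exactly the paper's bound and does strictly exceed $2^{\lfloor r/2\rfloor-1}+2^{\lceil r/2\rceil-1}$ for all $k\geq 0$ and $r\geq k+6$; the paper makes the parity bookkeeping painless by weakening $3\cdot 2^k+1\geq 4\cdot 2^{k/2}$, after which the coset term recombines into $2^{\lfloor r/2\rfloor-1}+2^{\lceil r/2\rceil-1}$ plus the positive surplus $10\cdot 2^k-8\cdot 2^{k/2}$. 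Note that your fallback of treating small $\dim(M/F')$ with the crude bound $|E|\geq 10\cdot 2^k$ alone does not cover all cases (for $k=0$, $r=9$ the target is $16>10$), so you need the coset contribution there as well; also, your assumption $r\geq 2k+8$ is unsupported and unnecessary.

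One smaller repair: Lemma~\ref{doubled_kite_hyperplane} should be applied inside $\cl(F''\cup\{x'\})$ for a single element $x'\in A\cap E$ of the given non-empty coset $A$ of $F'$; there $F''$ (dimension $k+5$, carrying $D^k(C_6)$, with $F''\subseteq F'$) is a hyperplane and $x'+\vs{F''}\subseteq A$, giving $|A\cap E|\geq 3\cdot 2^k+1$. In $\cl(F''\cup A)$, which is what you wrote, $F''$ has codimension $2$, so the lemma does not apply literally. These are fixable slips rather than gaps in the idea, but as written the displayed inequality and the small-case argument are not yet correct.
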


\begin{proof}
Let $F$ be a flat for which $M | F$ is the $k$-doubling of a kite, which contains an induced $D^{k+1}(I_3)$-restriction. Let $M_F = (E_F,F')$ be the contraction of $M$ by $F$. By Lemma \ref{kites_not_extremal}, $M_F$ is $I_3$-free. Also, $M_F$ is full-rank, as otherwise $M$ is rank-deficient. By Theorem \ref{I3free_extremal}, $|E_F| \geq 2^{\lfloor (r-k-6)/2 \rfloor} + 2^{\lceil (r-k-6)/2 \rceil}-2$. 

Note that $M|F$ contains an induced $D^k(C_6)$-restriction. By Lemma \ref{doubled_kite_hyperplane}, the coset corresponding to an element of $E_F$ contains at least $3 \cdot 2^{k}+1$ elements of $E$. Note $3 \cdot 2^{k}+1 \geq 4 \cdot 2^{k/2}$ for $k \geq 0$. Hence

	\begin{align*}
		|E|  &\geq  |E \cap F| + |E_F|(4 \cdot 2^{k/2})   \\
		&\geq 10 \cdot 2^k + (2^{\lfloor (r-k-6)/2 \rfloor} + 2^{\lceil (r-k-6)/2 \rceil}-2)(4 \cdot 2^{k/2}) \\
		& = 10 \cdot 2^k - 8 \cdot 2^{k/2} + 2^{\lfloor r/2 \rfloor-1} + 2^{\lceil r/2 \rceil -1} \\
		& > 2^{\lfloor r/2 \rfloor-1} + 2^{\lceil r/2 \rceil -1}.
	\end{align*}
\end{proof}

\subsection{$2T$-freeness}
We consider another matroid which we refer to as $2T$. The matroid $2T$ is the $6$-dimensional matroid with $6$ elements which is the direct sum of two triangles.

\begin{lemma}\label{2tfreeness}
Let $M | F$ be a maximal affine geometry of an $I_5$-free, triangle-free and $C_5$-free matroid, where $\dim(F) \geq 3$. Then $M / F$ is $2T$-free, or $M$ contains an affine geometry of dimension $\dim(F)+1$.
\end{lemma}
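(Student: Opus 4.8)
The plan is to argue by contradiction: suppose $M/F$ contains an induced $2T$-restriction and that $M$ has no affine geometry of dimension $\dim(F)+1$, and derive a contradiction. Write $W$ for the hyperplane of $F$ with $E\cap W=\varnothing$, so that $M|F$ is an affine geometry supported on $F\del W$. Let the two triangles of the $2T$ be carried by cosets; pick three non-empty cosets $A_1,A_2,A_3$ summing to $F$ (i.e.\ forming the first triangle in $M/F$) and three more $B_1,B_2,B_3$ forming the second triangle, with $\cl$ of the six corresponding new elements disjoint from $F$ and the two triples spanning complementary $2$-dimensional spaces over $F$. As in Lemma~\ref{contraction_max_ag}, for a non-empty coset $A$ put $\Stab(A\cap E)=\{w\in W\mid w+(A\cap E)=A\cap E\}$, a flat of $W$.

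The key steps, in order, are as follows. First, apply the ``covering'' idea of Claim~\ref{stab_covering} to each triangle separately: if $M|\cl(F\cup A_i)$ (resp.\ $M|\cl(F\cup B_j)$) were an affine geometry for some $i$ (resp.\ $j$), we could enlarge the affine geometry $M|F$ — indeed, $\cl(F\cup A_i)$ would then be an affine geometry of dimension $\dim(F)+1$, contradicting maximality; so by Lemma~\ref{contraction_max_ag} applied to each triangle, each of $M|\cl(F\cup A_1\cup A_2\cup A_3)$ and $M|\cl(F\cup B_1\cup B_2\cup B_3)$ is a doubled kite, and in particular $W=\Stab(A_1\cap E)\cup\Stab(A_2\cap E)\cup\Stab(A_3\cap E)$ with all three a common codimension-$2$ flat $W_A$, and similarly $W=\bigcup_j\Stab(B_j\cap E)$ over a codimension-$2$ flat $W_B$. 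Second, I would show these two doubled-kite structures must interact: take $w_A\in W$ not stabilising all of $A_1,A_2,A_3$-data appropriately and $w_B$ likewise for the $B$'s, together with elements $a_i\in A_i\cap E$, $b_j\in B_j\cap E$ and a point $z\in E\cap F$, and build a $5$-element set that is an induced $I_5$-restriction — for instance, choosing $a_i,b_j$ from two of the cosets (one from each triangle) plus $z,z+w$ for a suitable $w\in W$ that simultaneously fails to stabilise both chosen cosets; such a $w$ exists because $\Stab$ of each chosen coset is a proper (indeed hyperplane-or-smaller) flat of $W$ and two proper subspaces cannot cover $W$. The precise bookkeeping is that within one triangle the three stabiliser-hyperplanes cover $W$, but a single coset's stabiliser is just one hyperplane, so for the two chosen cosets (from the two different triangles) the union of their stabilisers is a union of two hyperplanes, which cannot be all of $W$ once $\dim(W)\ge 2$; pick $w$ outside it, giving $z, z+w, a_1, b_1$ together with one more element (say $a_2$ or a sixth point from $E\cap F$) forming an independent flat with no proper subsum in $E$.

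The main obstacle I expect is verifying that the purported $I_5$-restriction really is induced — that is, ruling out every proper non-trivial subsum of the five chosen elements lying in $E$ — because the sums mixing one coset from each triangle with elements of $F$ land in yet other cosets, and one must know something about those. I would handle this by exploiting that the $2T$ is induced in $M/F$: any sum of elements drawn from at most one of $A_1,A_2,A_3$ and at most one of $B_1,B_2,B_3$ and some elements of $F$ projects in $M/F$ either into $F$ (the contracted flat, so not a ground-set element of $M/F$) or into a coset that is \emph{not} among the six, since in $2T$ no edge of one triangle sums with a vertex of the other triangle to a ground-set element; hence that sum is not in $E$ at all. Combined with triangle-freeness (to kill the sums that stay inside the six-plus-$F$ cosets) and $C_5$-freeness (to control $5$-element circuits through the new points), this should close all the cases, with the only delicate point being the subsums that stay within a single triangle's span, which are exactly the ones governed by the doubled-kite structure from Step~1 and the choice of $w$ outside the two stabiliser hyperplanes.
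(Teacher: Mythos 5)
There is a genuine gap, and it occurs at your very first step. Lemma~\ref{contraction_max_ag} applies only when the three non-empty cosets represent an \emph{induced $I_3$-restriction} of $M/F$: the three corresponding elements of $M/F$ must be independent and every coset corresponding to a partial sum must be empty. The three cosets of one triangle of the $2T$ satisfy $A_1+A_2=A_3$ with $A_3$ non-empty, so they are dependent; in fact no triple among the six $2T$-cosets forms an induced $I_3$, since any triple contains two cosets from the same triangle and hence has a non-empty sum-coset. So ``apply Lemma~\ref{contraction_max_ag} to each triangle'' is not a legal move, and the structure you extract from it cannot hold: $\cl(F\cup A_1\cup A_2\cup A_3)$ has dimension $\dim(F)+2$, whereas a doubled kite of order $\dim(F)-3$ has dimension $\dim(F)+3$; moreover the covering $W=\bigcup_i\Stab(A_i\cap E)$ cannot be obtained by the argument of \ref{stab_covering}, which needs the sums $a_i+a_j$ to land in empty cosets. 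The correct analysis in fact gives the opposite picture: within one triangle the three stabilisers \emph{coincide} in a single flat $F_A$ of $W$, with each $A_i\cap E$ a coset of $F_A$, so their union is one proper flat and never covers $W$; consequently there are no ``three stabiliser hyperplanes per triangle'' and no element $w$ of the kind your Step 2 requires, and Step 2 collapses with Step 1.

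There is also a strategic problem with your endgame. You aim to finish by producing an induced $I_5$, but the genuinely extremal configuration that this lemma must account for contains no induced $I_5$ at all; what it contains is a larger affine geometry, which is precisely the second disjunct of the statement. The paper's proof reaches it by playing the two triangles off each other rather than analysing them separately: maximality of $M|F$ supplies $y\in F\cap E$ with $b_1+b_2+y\notin E$, and $I_5$-freeness then forces $(A_i\cap E)+(A_j\cap E)+y\subseteq A_k\cap E$; from this one deduces that $\Stab(A_1\cap E)=\Stab(A_2\cap E)=\Stab(A_3\cap E)=F_A$ and each $A_i\cap E$ is a coset of $F_A$ (similarly $F_B$ for the other triangle); an $I_5$-freeness covering claim ($A_3'\cup B_3'=F\cap E$) together with $\dim(F)\ge 3$ then forces $F_A$ and $F_B$ to be hyperplanes of $W$, and $\cl(F_A\cup\{y_A,a_1,a_2\})$ carries an affine geometry of dimension $\dim(F)+1$. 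Any repair of your proposal would need to replace Step 1 by such a cross-triangle argument; the per-triangle doubled-kite structure you posit simply is not there.
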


\begin{proof}
Let $W$ be the hyperplane of $F$ for which $W \cap E = \varnothing$.

Suppose for a contradiction that $M / F$ contains an induced $2T$-restriction. We think of this induced $2T$-restriction in terms of $6$ distinct non-empty cosets of $F$ defined on $A_1, A_2, A_3, B_1, B_2, B_3$ such that $A_1 + A_2 = A_3$ and $B_1 + B_2 = B_3$. 

We now make the following sequence of claims to uncover the structure of $M$. 

\begin{claim}\label{2t_c5}
For any distinct $i,j \in \{1,2,3\}$, if $a_i \in A_i \cap E$, $a_j \in A_j \cap E$, then $(a_i + a_j + \vs{W}) \cap E = \varnothing$.
\end{claim}
\begin{subproof}
Since $M$ is triangle-free, $a_i+a_j \notin E$. If there exists $a_k \in (a_i + a_j + W) \cap E$, then $\{a_i, a_j, a_k, z, a_i+a_j+a_k+z\}$ is an induced $C_5$-restriction for any $z \in F \cap E$, a contradiction. 
\end{subproof}

\begin{claim}\label{2t_base}
There exists $y \in F \cap E$ such that for any $\{i,j,k\} = \{1,2,3\}$, $A_i \cap E + A_j\cap E + y \subseteq A_k \cap E$. 
\end{claim}

\begin{subproof}
Pick $b_1 \in B_1 \cap E$ and $b_2 \in B_2 \cap E$. As $M$ is triangle-free, $b_1+b_2 \notin B_3 \cap E$. Due to maximality, we may pick $y \in F \cap E$ such that $b_1+b_2+y \notin E$; if not, then $M | \cl(F \cup \{b_1+b_2\})$ is a strictly larger affine geometry containing $M | F$. 

Let $a_i \in A_i \cap E$ and $a_j  \in A_j \cap E$. Then $y + a_i + a_j \in A_k \cap E $; otherwise $\{y, a_i, a_j, b_1, b_2\}$ is an induced $I_5$-restriction. 
\end{subproof}

\begin{claim}\label{2t_size}
For distinct $i,j \in \{1,2,3\}$, $|A_i \cap E + A_j\cap E| = |A_i\cap E|$.
\end{claim}

\begin{subproof}
By \ref{2t_base}, we have that $|A_i \cap E + A_j\cap E| \leq |A_k \cap E|$. Also, $|A_k \cap E| \leq |A_k \cap E + A_j \cap E|$. By applying \ref{2t_base} again, we have that $|A_k \cap E + A_j \cap E| \leq |A_i \cap E|$. Hence $|A_i \cap E + A_j \cap E| \leq |A_i \cap E|$. The claim follows. 
\end{subproof}

Note that \ref{2t_size} implies in particular that $|A_i \cap E| = |A_j \cap E|$.

Let $\Stab(A \cap E) = \{w \in \vs{W}| w + A  \cap E= A \cap E\}$. $\Stab(A \cap E)$ is a flat. Note that trivially $|A \cap E| \geq |\Stab(A \cap E)|$ when $|A \cap E| > 0$. We now claim the following.

\begin{claim}\label{2t_stab}\
\begin{itemize}
	\item For $i=1,2,3$, $|A_i \cap E| = |\Stab(A_i \cap E)|$,
	\item $\Stab(A_1 \cap E) = \Stab(A_2 \cap E) = \Stab(A_3 \cap E)$.
\end{itemize}
\end{claim}

\begin{subproof}
Fix $i$, and take some other $j \neq i$. 
By \ref{2t_size}, $|A_i \cap E + A_j\cap E| = |A_i\cap E|$. Let $a \in A_j \cap E$. Then 
	\begin{align*}
		|A_i\cap E| =  |A_i \cap E + A_j\cap E| & \geq |A_i \cap E + a| = |A_i \cap E|. 
	\end{align*}
	
This implies that $A_i \cap E + A_j\cap E = A_i \cap E + a$. Hence $A_j\cap E \subseteq \Stab(A_i\cap E) + a$. Therefore
	\begin{align*}
		|\Stab(A_i\cap E)| \geq |A_j\cap E|& = |A_i\cap E| \geq |\Stab(A_i\cap E)|.
	\end{align*}
	
Hence, it follows that $|A_i \cap E| = |\Stab(A_i \cap E)|$. Moreover, $A_j\cap E = \Stab(A_i\cap E) + a$. Taking the stabiliser on both sides, it follows that $\Stab(A_i \cap E)=\Stab(A_j \cap E)$. 
\end{subproof}

\ref{2t_stab} implies that the three stabilisers coincide on a flat, and that for each $i=1,2,3$, $A_i \cap E$ is a coset of that flat. 

Now, we may run the same argument, with the role of the $A$ cosets swapped with that of the $B$ cosets. This gives the following.

\begin{claim}\label{stab_property}\
\begin{itemize}
	\item $\Stab(A_1 \cap E) = \Stab(A_2 \cap E) = \Stab(A_3 \cap E)$.
	\item $\Stab(B_1 \cap E) = \Stab(B_2 \cap E) = \Stab(B_3 \cap E)$.
	\item For any $X \in \{A_1, A_2, A_3, B_1, B_2, B_3\}$, $|X \cap E| = |\Stab(X \cap E)|$.
\end{itemize}
\end{claim}

Let $F_A$ be the flat of $W$ for which $F_A = \Stab(A_1 \cap E)$, and $F_B$ another flat of $W$ for which $F_B = \Stab(B_1 \cap E)$. 

Fix any two elements $a_1 \in A_1 \cap E$ and $a_2 \in A_2 \cap E$, and similarly $b_1 \in B_1 \cap E$ and $b_2 \in B_2 \cap E$. Then by \ref{2t_c5} and \ref{stab_property}, we see that $E \cap A_i = \cl(F_A \cup \{a_i\})$ for $i=1,2$, and $E \cap A_3$ equals the set $a_1+a_2+A_3'$ where $A_3 ' =  \cl(F_A \cup \{y_A\}) \del F_A$ for some $y_A \in F \cap E$. Similarly, $E \cap B_i =\cl(F_B \cup \{b_i\})$ for $i=1,2$, and $E \cap B_3$ equals the set $b_1+b_2+B_3'$ where $B_3' = \cl(F_B\cup \{y_B\})\del F_B$ for some $y_B \in F \cap E$.

\begin{claim}\label{affine_cover}
$A_3' \cup B_3' = F \cap E$.
\end{claim}

\begin{subproof}
Suppose not. Pick $z \in (F \cap E) \del (A_3' \cup B_3')$. Then $\{a_1, a_2, b_1, b_2, z\}$ is an induced $I_5$-restriction, a contradiction
\end{subproof}

\begin{claim}\label{both_hyperplanes}
$F_A$ and $F_B$ are hyperplanes of $W$.
\end{claim}

\begin{subproof}
First note that $F_A, F_B \neq W$; if $F_A = W$, then $M | \cl(F \cup A_1)$ or $M | \cl(F \cup B_1)$ is a strictly larger affine geometry containing $M | F$. 

Now, suppose for a contradiction that at least one of $F_A$ or $F_B$, say $F_A$ without loss of generality, has codimension at least $2$.  But then $|(F \cap E) \del (A_3' \cup B_3')| \geq  2^{\dim(F)-1} - 2^{\dim(F_A)} - 2^{\dim(F_B)} \geq 2^{\dim(F)-1} - 2^{\dim(F)-3} - 2^{\dim(F)-2} = 2^{\dim(F)-3} > 0$. This contradicts \ref{affine_cover}. Therefore both $F_A$ and $F_B$ are hyperplanes of $W$. 
\end{subproof}

By \ref{both_hyperplanes}, $F_A$ is a hyperplane, and therefore it follows that $M | \cl(F_A \cup \{y_A, a_1, a_2\})$ is an affine geometry of rank $\dim(F)+1$. 
\end{proof}

\subsection{$I_3$-freeness and $2T$-freeness}

In this subsection, we determine the smallest $I_3$-free, $2T$-free matroids. 

We use the following lemma in [\ref{nn1}]. 

\begin{lemma}[{[\ref{nn1}], Lemma 2.14}]\label{PQR}
	Let $(P, Q, R)$ be a partition of a binary projective geometry $G$ for which no triangle $T$ of $G$ satisfies $|T \cap P| \ge 1$ and $|T \cap R| = 1$. 
	Then
	\begin{itemize}
		\item $\cl(P) \subseteq P \cup Q$, and
		\item All cosets of $\cl(P)$ in $G$ are contained in $Q$ or $R$.
	\end{itemize}
\end{lemma}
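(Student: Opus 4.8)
The plan is to prove the two conclusions in order, using the first to bootstrap the second. Throughout I will use that, over $\bF_2$, $\cl(P)$ is exactly $\{0\}$ together with all nonzero sums $p_{1}+\dots+p_{k}$ of distinct elements of $P$, and that a triple of distinct nonzero points $\{x,y,x+y\}$ is always a triangle; the hypothesis says no triangle meets $P$ and meets $R$ in exactly one point.

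For the first bullet I would prove the sharper claim that every nonzero sum $z=p_{1}+\dots+p_{k}$ of distinct elements of $P$ lies in $P\cup Q$, by induction on $k$. The case $k=1$ is trivial, and for $k=2$, if $p_{1}+p_{2}\in R$ then $\{p_{1},p_{2},p_{1}+p_{2}\}$ is a triangle meeting $P$ twice and $R$ once, a contradiction. For $k\ge 3$ write $z=w+p_{k}$ with $w=p_{1}+\dots+p_{k-1}$: if $w=0$ then $z=p_{k}\in P$; if $w\in P$ then $z$ is a sum of two elements of $P$ and the case $k=2$ applies; and if $w\in Q$ (the remaining possibility by induction), then assuming $z\in R$ makes $\{w,p_{k},z\}$ a triangle meeting $P$ in $p_{k}$ and meeting $R$ exactly once, in $z$, a contradiction. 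Since every element of $\cl(P)$ has the stated form, $\cl(P)\subseteq P\cup Q$.

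For the second bullet, put $S=\cl(P)$; by the first bullet $S\cap R=\varnothing$, and since $S$ is a flat containing $P$ no coset of $S$ can meet $P$ (a point of $P$ in a coset of $S$ would lie in $S$, but a flat is disjoint from each of its cosets), so every coset of $S$ lies in $Q\cup R$ and it remains to exclude a coset meeting both. Suppose $c_{q}\in Q$ and $c_{r}\in R$ lie in a common coset of $S$; then $s:=c_{q}+c_{r}$ is a nonzero element of $\vs{S}$, hence $s\in S$, so $s=p_{1}+\dots+p_{m}$ for distinct $p_{i}\in P$. Form the chain $v_{0}=c_{q}$ and $v_{j}=c_{q}+p_{1}+\dots+p_{j}$, so $v_{m}=c_{r}$. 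No $v_{j}$ lies in $S$ (else $c_{q}\in S$), so each $v_{j}\in Q\cup R$; since $v_{0}\in Q$ and $v_{m}\in R$ there is a least $j\ge 1$ with $v_{j}\in R$, whence $v_{j-1}\in Q$. Then $\{v_{j-1},p_{j},v_{j}\}$ is a triangle meeting $P$ in $p_{j}$ (as $v_{j-1},v_{j}\notin P$) and meeting $R$ exactly once, in $v_{j}$, contradicting the hypothesis. Hence every coset of $\cl(P)$ is contained in $Q$ or in $R$.

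The argument is largely routine; the one place needing an idea rather than a computation is choosing the right induction/decomposition. A naive induction on $|\cl(P)|$, or a single-triangle argument, fails to handle intermediate sums landing in $Q$, and the fix in both bullets is the same: peel off one element of $P$ at a time so that the first change of ``type'' exposes a single forbidden triangle. The only bookkeeping is to check that each named triple $\{x,y,x+y\}$ really is a triangle, i.e.\ its three points are distinct and nonzero, which holds in every case since the summands are nonzero and, by construction, pairwise distinct.
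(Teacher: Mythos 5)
Your proof is correct. There is nothing in this paper to compare it against: the lemma is quoted from the cited paper of Nelson and Nomoto on claw-free binary matroids (their Lemma 2.14) without proof. Your argument stands on its own: the induction that peels off one element of $P$ at a time handles the possibility of intermediate sums landing in $Q$ for the first bullet, and for the second bullet the chain $v_0,\dots,v_m$ with the first index where the type switches from $Q$ to $R$ produces a triangle $\{v_{j-1},p_j,v_j\}$ meeting $P$ and meeting $R$ exactly once; your bookkeeping (each $v_j$ avoids $\vs{\cl(P)}$, hence is nonzero and outside $P$, and the three points of each exhibited triple are distinct and nonzero) is exactly what is needed.
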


We now prove the following main result of this subsection. We remark that it is possible to use the structural theorem for claw-free matroids from [\ref{nn1}] to give an alternative proof; we opt for a more direct approach involving less machinery. Here, $\PGS(t_1,t_2)$ is the direct sum of a projective geometry of dimension $t_1$ and a projective geometry of dimension $t_2$, called a \emph{PG-sum}.

\begin{lemma}\label{i3_2t_free_extremal}
Let $M = (E,G)$ be an $r$-dimensional, full-rank matroid that is $I_3$-free and $2T$-free. Then $|E| \geq 2^{r-1}$. Moreover, equality holds if and only if $M \cong \AG(r-1, 2)$ or $M$ can be obtained by a sequence of doublings of $\PGS(1,t)$ for some $t \leq r$. 
\end{lemma}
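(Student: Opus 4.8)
# Proof Proposal for Lemma \ref{i3_2t_free_extremal}

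The plan is to induct on $r$, using the doubling structure that $I_3$-free matroids are known to have. First I would recall that every full-rank $I_3$-free matroid $M=(E,G)$ of dimension $r \ge 1$ either is an affine geometry $\AG(r-1,2)$, or is a doubling of a smaller full-rank $I_3$-free matroid, or contains a projective-geometry direct summand; more precisely, since $I_3$-freeness is hereditary and well understood, $M$ has a hyperplane $H$ such that $M$ is the doubling of $M|H$, unless $M$ has a "non-doubling" structure that we can pin down directly in low dimensions. The base cases ($r \le 6$, say) should be checked by hand: for small $r$, an $I_3$-free matroid that is also $2T$-free is easily enumerated, and the bound $|E| \ge 2^{r-1}$ with the stated equality cases ($\AG(r-1,2)$, or a doubling of $\PGS(1,t)$) follows. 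Note $\PGS(1,t)$ itself has $3 + (2^{t+1}-1) = 2^{t+1}+2$ elements in dimension $t+2$, so its $(r-t-2)$-fold doubling has $2^{r-t-2}(2^{t+1}+2) = 2^{r-1} + 2^{r-t-1}$ elements — wait, that exceeds $2^{r-1}$, so in fact the equality case must be interpreted carefully: the doublings of $\PGS(1,t)$ achieving equality are exactly those where no extra doubling slack appears, i.e. one must track the count precisely and the "equality" doublings of $\PGS(1,t)$ are a restricted family; I would make this bookkeeping explicit early.

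For the inductive step, suppose $r$ is large and $M$ is not an affine geometry. The key structural input is that $M$, being $I_3$-free and not affine, must contain an induced triangle or, more usefully, behaves like a doubling or a PG-sum. I would apply Lemma \ref{PQR} with a carefully chosen partition $(P,Q,R)$ of $G$: take $P$ to be (the points spanned by) a maximal projective-geometry restriction $M|\cl(P)$ sitting inside $M$, which exists and is nontrivial precisely because $M$ is not affine (an $I_3$-free, triangle-free matroid is affine by Lemma \ref{i3triangle}, so non-affineness forces a triangle, hence a nonempty projective part). The $2T$-freeness is what prevents two disjoint triangles, forcing this projective part to be "coherent" — essentially there is a single maximal PG-summand-like flat $\cl(P)$, and Lemma \ref{PQR} then tells us $\cl(P) \subseteq P \cup Q$ and every coset of $\cl(P)$ lies entirely in $Q$ or entirely in $R$. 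This lets me write $M$ as built from $M|\cl(P) = \PG(t,2)$ together with the contraction $M/\cl(P)$, which I would argue is again $I_3$-free and $2T$-free (or affine) of smaller dimension, and apply induction.

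The two regimes to handle are: (a) $M$ is a doubling, $M = D(M|H)$, in which case $|E| = 2|E \cap H|$ and $M|H$ is $I_3$-free and $2T$-free of dimension $r-1$, so induction gives $|E| \ge 2 \cdot 2^{r-2} = 2^{r-1}$ with equality forcing $M|H$ to be an equality example, hence $M$ is a doubling of $\AG(r-2,2) = \AG(r-2,2)$ (whose doubling is $\AG(r-1,2)$) or a doubling of a doubling of $\PGS(1,t)$ — both on the allowed list; and (b) $M$ is not a doubling, in which case the maximal projective part $\cl(P) = \PG(t,2)$ must have $t \ge 1$ and, using that all cosets of $\cl(P)$ go to $Q$ or $R$, I can decompose the ground set count as $|E| = (2^{t+1}-1) + \sum_{\text{cosets}} |\text{coset} \cap E|$ and lower-bound each coset contribution using the induced structure on $M/\cl(P)$, which is $I_3$-free of dimension $r-t-1$; the $2T$-freeness forces $M/\cl(P)$ to in fact be triangle-free (another triangle would pair with one in $\cl(P)$ to give $2T$), hence affine by Lemma \ref{i3triangle}, hence $|E/\cl(P)| \ge 2^{r-t-2}$ and every relevant coset is full, giving $|E| \ge (2^{t+1}-1)\cdot(\text{something}) $ that I must check is $\ge 2^{r-1}$, with equality exactly when $M$ is a doubling-tower over $\PGS(1,t)$ (the case $t=1$) and $M/\cl(P)$ is the corresponding affine geometry.

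The main obstacle I anticipate is regime (b): correctly controlling how the affine geometry $M/\cl(P)$ and the projective flat $\cl(P)$ glue, and in particular verifying that no coset can be "mixed" or "empty" in a way that either violates $I_3$-freeness/$2T$-freeness or drops below the bound — this is where I'd need to be careful that the only surviving non-affine equality example is $\PGS(1,t)$ and its doublings, ruling out $\PGS(t_1,t_2)$ with both $t_i \ge 1$ (which contains $2T$ once $t_1,t_2 \ge 2$, and for $t_1 = 1$ reduces to the $\PGS(1,t)$ case) and ruling out larger projective parts $t \ge 2$ (a $\PG(2,2)$ contains a triangle, and combined with an affine quotient of dimension $\ge 2$ one finds two disjoint triangles, contradicting $2T$-freeness — so actually $t \le 1$ unless $r - t - 1 \le 1$). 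Pinning down these small exceptional overlaps cleanly is the delicate part; everything else is induction plus the counting identity for doublings.
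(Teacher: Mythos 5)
Your proposal has a genuine gap at its core: the structural trichotomy you invoke for $I_3$-free matroids (``either an affine geometry, or a doubling of a smaller full-rank $I_3$-free matroid, or contains a projective-geometry direct summand'') is not a theorem available here, and proving it is essentially the hard content of the lemma. The paper explicitly avoids the claw-free structure theorem of [\ref{nn1}] and instead works with a minimal counterexample: it first shows that every proper flat of a counterexample must have a mixed coset (otherwise contracting exhibits $M$ as a doubling and induction applies), then uses an averaging argument over all hyperplanes to produce a hyperplane $H$ with $M|H$ rank-deficient, and finally a local analysis around that hyperplane (via Lemma \ref{PQR}) forces $F\cap E=F$ and $M\cong\PGS(1,r-1)$, contradicting minimality. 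Your regime (a) is fine, but regime (b) rests on the unproved trichotomy, so the induction never gets off the ground in the non-doubling case.

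Within regime (b) there is a second, independent error: you claim that $2T$-freeness forces $M/\cl(P)$ to be triangle-free because ``another triangle would pair with one in $\cl(P)$ to give $2T$.'' This fails twice over. A triangle in the contraction is only three cosets summing to zero; the chosen representatives in $E$ need not form a triangle of $M$ at all. And even two genuinely disjoint triangles of $E$ do not yield an induced $2T$-restriction: $2T$-freeness only forbids a $6$-dimensional flat whose entire intersection with $E$ is the two triangles, and the flat spanned by your two triangles may well contain further elements of $E$ (indeed in $\PGS(2,t)$-like configurations it does). So you cannot conclude the contraction is affine, and the counting that follows collapses. Finally, your bookkeeping for the equality case is based on a misreading of $\PGS(1,t)$: in the paper's conventions a projective geometry of dimension $1$ is a single point, so $\PGS(1,t)$ has $1+(2^t-1)=2^t$ elements in dimension $t+1$ and its $k$-fold doublings meet the bound $2^{r-1}$ exactly; the ``excess'' you flagged does not exist, but as written your plan never resolves this and so never actually establishes the stated equality characterization.
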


\begin{proof}
The result is trivially true when $r=1$. Let $M$ be a minimum counterexample, on $\dim(M)$.

We first claim that for every proper flat $F \subseteq G$, there exists a mixed coset $A$ of $F$, meaning $0 < |A \cap E| < |A|$. Otherwise we may contract $F$, and $M/F$ is a full-rank, $I_3$-free and $2T$-free matroid. Since $\dim(M/F) < \dim(M)$, it follows that $|M / F| \geq 2^{r-\dim(F)-1}$. Each non-empty coset of $F$ contains $2^{\dim(F)}$ elements of $E$, so we have that $|E| \geq |E \cap F| + 2^{r-\dim(F)-1} \cdot 2^{\dim(F)} \geq 2^{r-1}$. Moreover, $|E| = 2^{r-1}$ if and only if $|F \cap E| = 0$ and the bound is attained for $M/F$. Hence, it follows that $M$ is the $\dim(F)$-doubling of $M/F$, and $M / F$ is either an affine geometry, or can be obtained via doublings of some $\PGS(1,t)$. Hence $M$ is an affine geometry, or $M$ can be obtained by a sequence of doublings of $\PGS(1,t)$, which contradicts minimality. Hence we may assume that there exists some mixed coset.

\begin{claim}
There exists a hyperplane $H$ of $G$ for which $M | H$ is rank-deficient.
\end{claim}

\begin{subproof}
Suppose not, so that $M | H$ is full-rank for every hyperplane $H$. By minimality, $|E \cap H| \geq 2^{r-2}$. Let $\cH$ be the set of hyperplanes of $G$. Note that any given point in $G$ is contained in precisely ${r-1 \choose r-2}_2$ hyperplanes of $G$. By a standard double counting argument, we have

	\begin{align*}
		|E| =  \frac{1}{{r-1 \choose r-2}_2} \sum_{H \in \cH} |E \cap H|
		 & \geq  \frac{{r \choose r-1}_2}{{r-1 \choose r-2}_2} \cdot 2^{r-2}\\
		& > 2 \cdot 2^{r-2} \\
		& = 2^{r-1}
	\end{align*}
	
This contradicts the minimality of $M$. 
\end{subproof}

Let $H$ be a hyperplane of $G$ for which $M|H$ is rank-deficient. Let $F$ be a hyperplane of $H$ such that $E \cap H \subseteq F$. Let $A_1$ and $A_2$ be the remaining two cosets of $F$. We may assume that $A_i \cap E \neq \varnothing$ for $i=1,2$; otherwise $M$ is rank-deficient.

\begin{claim}\label{useful_2tclaw}
If $v \in (A_i \cap E + A_i \cap E) \del E$ for any $i \in \{1,2\}$, then $v+w \in E$ for all $w \in (E \cap A_1) \cup (E \cap A_2)$.
\end{claim}

\begin{subproof}
Observe that if $v \in (A_i \cap E + A_i \cap E) \del E$, then for any $b \in E \cap A_{i-1}$ (indices taken modulo $2$), $v+b \in E$; write $v=a_1+a_2$ where $a_1,a_2 \in A_i \cap E$, then otherwise $\{a_1,a_2,b\}$ is a claw. Furthermore, since $|E \cap A_{i-1}| > 0$, there exists some $b \in E \cap A_{i-1}$, and by this observation, $v+b \in E \cap A_{i-1}$, so $v \in (A_{1-i} \cap E + A_{1-i} \cap E) \del E$. Applying the same observation, we conclude that if $v \in (A_i \cap E + A_i \cap E) \del E$ for some $i \in \{1,2\}$, then in fact $v+w \in E$ for all $w \in (E \cap A_1) \cup (E \cap A_2)$. 
\end{subproof}

\begin{claim}
$(A_1 \cap E + A_1 \cap E) \del E = (A_2 \cap E + A_2 \cap E) \del E$. 
\end{claim}

\begin{subproof}
Let $v \in (A_i \cap E + A_i \cap E) \del E$. Since $|E \cap A_{i-1}| > 0$, take $b \in E \cap A_{i-1}$. By \ref{useful_2tclaw}, $v+b \in E \cap A_{i-1}$. 
\end{subproof}

\begin{claim}\label{no_two_points_F}
$(A_1 \cap E + A_1 \cap E) \del E = \varnothing$.
\end{claim}

\begin{subproof}
Let $P= (A_1 \cap E + A_1 \cap E) \del E$, $Q = F \cap E$, $R=F \del (P \cup Q)$. We check that there are no triangles $T$ in $F$ for which $|T \cap P| \geq 1$ and $|T \cap R|=1$. Suppose for a contradiction that such a triangle $T=\{v,w,v+w\}$ exists, with $v \in P$ and $w \in R$. Fix an elements $a_1 \in E \cap A_1$. Since $v \in P$ it follows from \ref{useful_2tclaw} that $v+a_1 \in E$. Then, since $w \in R$, $a_1+w \notin E$ and $a_1+v+w \notin E$. Now, if $v+w \in E$, then $\{a_1, a_1+v,v+w\}$ is a claw, a contradiction, so $v + w \notin E$. If $v+w \in P$, then \ref{useful_2tclaw} would imply that $a_1+v+w \in E$, a contradiction. So $v+w \in R$. By Lemma \ref{PQR}, it follows that $\cl(P) \subseteq P \cup Q$, and the cosets of $\cl(P)$ in $F$ are contained in $Q$ or $R$. We now claim that the flat $F' = \cl(P)$ has no mixed cosets.

First, take a coset $B$ of $F'$ in $F$. Then either $B \subseteq Q$, in which case $B \subseteq E$, and if $B \subseteq R$, then $B \cap E = \varnothing$. It remains to show that the cosets of the form $a + \vs{F'}$ where  $a \in A_1 \cup A_2$ are not mixed. Take $a \in E \cap A_i$ for some $i \in \{1,2\}$ and we show that $a + \vs{F'} \subseteq E$. Let $b \in E \cap A_{1-i}$.

 Let $1P= P$, and let $kP = P + (k-1)P$ for $k \geq 2$. Then $\cl(P) = \cup_{k \geq 1}(kP)$. We prove by induction that for any $k \geq 1$, any $v \in kP$ satisfies $a+v, b+v \in E$. The base case $k=1$ follows from \ref{useful_2tclaw}. Now suppose that the statement is true for $(k-1)P$. Let $v \in kP$ so that there exists $w \in P$ such that $w+v \in (k-1)P$. If $v \in P$, then by \ref{useful_2tclaw} $a+v, b+v \in E$, so suppose not, so that $v \in E$. Then at least one of $a+v \in E$ and $b+v \in E$ must hold; otherwise $\{a,b,v\}$ is a claw. Assume now for a contradiction that $a+v \notin E$ and $b+v \in E$; the other case is symmetrical. Then $w = (a+v) + (a+w+v) \in (A_i \del E) + (A_i \cap E)$, which is a contradiction to \ref{useful_2tclaw} since $w \in P$. Therefore, it follows that $a+v, b+v \in E$. By induction, $a+\vs{F'} \subseteq E$, and there are no mixed cosets of $F'$ in $G$. 

By the remark from the beginning of the proof, every proper flat has to have at least one mixed coset, so this means that $F' = \cl(P)$ is empty. Hence $(A_i \cap E + A_i \cap E) \del E = \varnothing$ for $i=1,2$. 
\end{subproof}

\begin{claim}
$F \cap E = F$.
\end{claim}

\begin{subproof}
Take $v, w \in F \cap E$, and fix $a_i \in E \cap A_i$, $i=1,2$. We claim that $v+w \in E$. So suppose not. By \ref{no_two_points_F}, $a_i+v+w \notin E$ for $i=1,2$. Note that for each $i=1,2$, precisely one of $a_i+v \in E$ and $a_i+w \in E$ must hold; if none holds, then $\{a_i,v,w\}$ is a claw, and if both hold, then the triangle $\{a_i+v, a_i+w, v+w\}$ violates \ref{no_two_points_F}. Assume without loss of generality that $a_1+v \in E$ (and $a_1+w \notin E$). Now, if $a_2+w \in E$ (and hence $a_2+v \notin E$), then $M \rvert \cl(\{a_1, v, a_2, w\})$ is an induced $2T$-restriction. Hence $a_2+v \in E$ and $a_2+w \notin E$. But then $\{a_1,a_2,w\}$ is a claw, a contradiction. So $v+w \in E$. We therefore conclude that $\cl(F \cap E) \subseteq E$.

It remains to show that $\cl(F \cap E) = F$. If $\cl(F \cap E)$ is a proper subset of $F$, then by picking $a_i \in E \cap A_i$ for $i=1,2$ and observing \ref{no_two_points_F}, we see that $E \subseteq \cl((F \cap E) \cup \{a_1,a_2\})$, and so $M$ is rank-deficient, a contradiction. Therefore $\cl(F \cap E) = F$ and hence $F \cap E = F$. 
\end{subproof}

Finally, we uniquely determine the matroid $M$. 

\begin{claim}
$M \cong \PGS(1,r-1)$.
\end{claim}

\begin{subproof}
We fix $a_i \in A_i \cap E$, $i=1,2$. Take $v \in F$. Then $|E \cap \{v+a_1,v+a_2\}| \geq 1$; otherwise $\{v,a_1,a_2\}$ is a claw. Hence $|E| \geq 2+2|F| = 2^{r-1}$. By minimality, $|E| = 2^{r-1}$. This equality occurs only if for every $v \in F$, $|E \cap \{v+a_1,v+a_2\}| = 1$. Under this assumption that each $v \in F$ satisfies precisely one of $v+a_1 \in E$ or $v+a_2 \in E$, we now show that there exists $i \in \{1,2\}$ such that $v+a_i \in E$ for all $v \in F$. Suppose not, so there exist $v,w \in F$ for which $v+a_1, w+a_2 \in E$. Without loss of generality, suppose $v+w+a_1 \in E$. But then $\{a_2, v, v+w+a_1\}$ is a claw, a contradiction. 

This implies that $E = \cl(F \cup \{a_i\}) \cup \{a_{1-i}\}$, so $M \cong \PGS(1,r-1)$. 
\end{subproof}
\end{proof}

\section{Main Theorem}
We can now combine the results in the prior sections to give our main result, restated below.

\begin{theorem}
Let $M=(E,G)$ be an $r$-dimensional, full-rank, $I_5$-free, and triangle-free matroid. Then $|E| \geq 2^{\lfloor r/2 \rfloor -1 } + 2^{\lceil r/2 \rceil -1 }$. Moreover, when $r \geq 6$, equality holds if and only if $M$ is the direct sum of two affine geometries of dimension $\lfloor r/2 \rfloor$ and $\lceil r/2 \rceil$ respectively.
\end{theorem}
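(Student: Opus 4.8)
The plan is to feed the structural results of Sections~3 and~4 into a case analysis organised around a maximum affine geometry inside $M$. Throughout we may assume $M$ contains an affine geometry of dimension at least $3$ (the opposite case, where every independent triple of $E$ is a claw, together with the small values $r \le 5$, is handled by a routine separate argument and gives the bound with room to spare once $r \ge 6$).

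\emph{Reduction to the affine, $C_5$-free case.} If $M$ is not affine, then since $M$ is $I_5$-free and triangle-free it has an induced $C_5$-restriction on some flat $F$, and Lemma~\ref{extremal_examples_are_affine} already gives $|E| \ge 2^{\lfloor r/2 \rfloor -1} + 2^{\lceil r/2 \rceil -1}$, with strict inequality when $r \ge 6$; so the bound holds and equality is impossible for $r \ge 6$. Hence we may assume $M$ is affine, and therefore $C_5$-free.

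\emph{Contracting a maximum affine geometry.} Pick a flat $F$ with $M\rvert F$ an affine geometry of maximum dimension $d := \dim(F) \ge 3$; if $d = r$ then $M \cong \AG(r-1,2)$ and the claim is immediate, so assume $3 \le d \le r-1$. If $M$ contains an induced doubled kite, Lemma~\ref{kite_too_many_elements} gives $|E| > 2^{\lfloor r/2 \rfloor -1} + 2^{\lceil r/2 \rceil -1}$ and we are done, so assume it does not. By Lemma~\ref{contraction_max_ag} together with the maximality of $F$, a claw in $M/F$ would force either a strictly larger affine geometry or a doubled kite, so $M/F$ is $I_3$-free; by Lemma~\ref{2tfreeness} and maximality $M/F$ is $2T$-free; and $M/F$ is full-rank, else $M$ would be rank-deficient. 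With $n := \dim(M/F) = r-d$, Lemma~\ref{i3_2t_free_extremal} gives $|M/F| \ge 2^{n-1}$. Every non-empty coset of $F$ meets $E$, so
\[
 |E| = |E \cap F| + \sum_{A \text{ non-empty}} |A \cap E| \ \ge\ 2^{d-1} + |M/F| \ \ge\ 2^{d-1} + 2^{r-d-1},
\]
and since $d \mapsto 2^{d-1}+2^{r-d-1}$ attains its minimum over $\{1,\dots,r-1\}$ exactly at $d \in \{\lfloor r/2\rfloor, \lceil r/2\rceil\}$, the right-hand side is at least $2^{\lfloor r/2\rfloor -1}+2^{\lceil r/2\rceil -1}$, proving the bound.

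\emph{The equality case and reconstruction ($r \ge 6$).} If equality holds then $M$ is affine with no doubled kite, and every inequality above is tight: so $\{d,r-d\} = \{\lfloor r/2\rfloor,\lceil r/2\rceil\}$, each non-empty coset meets $E$ in a single element $a_A$, and $|M/F| = 2^{r-d-1}$, whence by the equality statement of Lemma~\ref{i3_2t_free_extremal} the matroid $M/F$ is $\AG(r-d-1,2)$ or is obtained by doublings from some $\PGS(1,t)$. In the latter case with $t \ge 2$, $M/F$ has an induced $\PGS(1,2)$ (a coloop together with a triangle); using the triangle, the affine geometry $M\rvert F$, and the single-element cosets, one exhibits an induced $C_5$ (when the three triangle representatives sum into $E \cap F$) or an induced $I_5$ (otherwise) in $M$, a contradiction, while $t \le 1$ makes $M/F$ itself affine. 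So $M/F \cong \AG(r-d-1,2)$. Finally set $F^* = \cl(\{a_A\})$; its image spans $M/F$, so $F^* + F = G$, and it suffices to show $F^* \cap F = \varnothing$, for then $G = F^* \oplus F$ with $M\rvert F^* \cong \AG(r-d-1,2)$ and $M\rvert F \cong \AG(d-1,2)$, giving $M \cong \AG(\lfloor r/2\rfloor-1,2) \oplus \AG(\lceil r/2\rceil-1,2)$. A non-zero point of $F^*\cap F$ is a minimal sum $p = a_{A_1}+\cdots+a_{A_m} \in F$ with $m$ even and in fact $m=4$ (for $m=2$ forces $A_1=A_2$); then $p \in E \cap F$ gives an induced $C_5$ on $\{a_{A_1},a_{A_2},a_{A_3},a_{A_4},p\}$, and $p \in F \del E$ makes $\{a_{A_1},a_{A_2},a_{A_3}\}$ a claw that extends, via two suitable elements of $E \cap F$, to an induced $I_5$ — both impossible, and longer sums reduce to $m=4$. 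The converse is straightforward: $\AG(\lfloor r/2\rfloor-1,2)\oplus\AG(\lceil r/2\rceil-1,2)$ is full-rank, triangle-free, and $I_5$-free (any five independent elements contain three from one summand, whose sum lies in $E$), with exactly $2^{\lfloor r/2\rfloor-1}+2^{\lceil r/2\rceil-1}$ elements.

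The heart of the argument — and the main obstacle — is the equality analysis: ruling out the non-affine extremal candidates for $M/F$ and proving the gluing between $M\rvert F$ and $M/F$ is trivial, both of which come down to pinning down induced $C_5$- and $I_5$-restrictions carefully. A subsidiary nuisance is the reduction to $\dim(F) \ge 3$ (needed for Lemmas~\ref{contraction_max_ag} and~\ref{2tfreeness}), together with the base cases, which are elementary but must be disposed of.
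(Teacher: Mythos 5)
Your outline of the lower bound is essentially the paper's own proof: reduce to the affine ($C_5$-free) case via Lemma~\ref{extremal_examples_are_affine}, contract a maximum affine geometry $F$, use Lemmas~\ref{contraction_max_ag}, \ref{kite_too_many_elements} and \ref{2tfreeness} to conclude $M/F$ is $I_3$-free and $2T$-free, and finish with Lemma~\ref{i3_2t_free_extremal} and convexity of $d\mapsto 2^{d-1}+2^{r-d-1}$. Two caveats on the parts you defer or alter. First, the case you wave off as routine (no $3$-dimensional affine geometry, i.e.\ $M$ also $C_4$-free) is exactly where the paper shows $r\le 5$, by taking six independent elements and producing an induced $C_4$; it is short but it is a genuine step, and ``gives the bound with room to spare once $r\ge 6$'' is the wrong description: the point is that this case cannot occur at all when $r\ge 6$. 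Second, in ruling out the $\PGS(1,t)$ outcome of Lemma~\ref{i3_2t_free_extremal} you have the two subcases swapped: if the three triangle representatives $x_1,x_2,x_4$ sum to a point $y\in E\cap F$, those points only yield an induced $C_4$, which is not forbidden; the contradiction there is an induced $I_5$ using the coloop representative and two points of $(E\cap F)\del\{y\}$, while the induced $C_5$ arises in the other case, when the sum lies in $F\del E$. Both structures you name do occur, but in the opposite cases from the ones you assign them to.

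The genuinely different piece is your reconstruction in the equality case, and it is also where the real gap sits. You take $F^*=\cl(\{a_A\})$, the closure of \emph{all} coset representatives, and argue $F^*\cap F=\varnothing$ by taking a minimal expression $p=a_{A_1}+\cdots+a_{A_m}\in F\del\{0\}$ and asserting ``in fact $m=4$'' and ``longer sums reduce to $m=4$''. Minimality does not give this: the point set of an affine geometry has zero-sum subsets of every even size $m\ge 4$ containing no proper zero-sum subset (for instance six points whose affine parts are $0,e_1,e_2,e_3,e_4,e_1+e_2+e_3+e_4$), so a minimal representation can have $m=6$ or more, and no 4-term sub-relation is available inside it. The claim is salvageable, but needs an argument you do not supply: first prove the $m=4$ statement for \emph{every} zero-sum quadruple of cosets (your $C_5$/$I_5$ dichotomy does this and forces the four representatives to sum to $0$), and then shrink longer sums by replacing $a_{A_1}+a_{A_2}+a_{A_3}$ with the representative of the fourth point of the affine plane through the images $\bar A_1,\bar A_2,\bar A_3$ --- that coset is non-empty because in the equality case $M/F$ is the full affine geometry. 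The paper avoids this lifting question altogether: it takes representatives of only a basis of cosets, shows the matroid induced on their closure is triangle-free and $I_3$-free, hence an affine geometry by Lemma~\ref{i3triangle}, and then counting non-empty cosets identifies $E\del F$ with that affine geometry. Either repair works, but as written the reduction step is unjustified.
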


\begin{proof}
By Lemma \ref{extremal_examples_are_affine}, if $M$ contained an induced $C_5$-restriction, then the bound holds, and when $r \geq 6$, no such matroids attain the bound. Hence we may assume that $M$ is $C_5$-free, hence affine.

Suppose first that $M$ is $C_4$-free. We first claim that $M$ can have rank at most $5$. For a contradiction, take an independent set of six elements $x_i \in E$, $i=1,\dots,6$. By triangle-freeness, $C_4$-freeness and $C_5$-freeness, it follows that $\sum_{i \in I}x_i \notin E$ for any $I \subseteq \{1,2,3,4,5,6\}$ and $|I| = 2,3,4$, and therefore, because of $I_5$-freeness, $\sum_{i \in I}x_i \in E$ for any $|I|=5$. But then $\{x_1+x_2+x_3+x_4+x_5, x_2+x_3+x_4+x_5+x_6, x_1, x_6 \}$ is an induced $C_4$-restriction, a contradiction. It is then easy to check that the only possible full-rank matroids that are also $C_4$-free are $I_1$, $I_2$, $I_3$, $I_4$ and $C_6$, all of which satisfy the theorem.

Suppose that $M|F$ is a maximum affine geometry; note that $\dim(F) \geq 3$ since $M$ has an induced $C_4$-restriction. Let $M_F = (E_F, F')$ denote $M/F$. Note that $M_F$ is full-rank. By Lemma \ref{contraction_max_ag} and maximality, either $M_F$ is $I_3$-free, or there exists a doubled kite, say of order $k$ (so that the dimension of $F$ is $k+3$). Then by Lemma \ref{kite_too_many_elements}, $|E|$ does not attain the bound. Hence we may assume that $M_F$ is $I_3$-free. By Lemma \ref{2tfreeness} and maximality, $M_F$ is $2T$-free. By Lemma \ref{i3_2t_free_extremal}, it follows that $|E_F| \geq 2^{r-\dim(F)-1}$. If we let $\cC$ be the set of cosets of $F$, then
	\begin{align*}
		|E| &= |E \cap F| + \sum_{A \in \cC} |E \cap A| \\
		&\geq |E \cap F| + |E_F| \\
		& \geq 2^{\dim(F)-1} +  2^{r-\dim(F)-1}\\
		& \geq 2^{\lfloor r/2 \rfloor -1 } + 2^{\lceil r/2 \rceil -1 }
	\end{align*}

This proves the bound. We now determine the extremal examples. First, note that when $M$ is the direct sum of two affine geometries of dimension $\lfloor r/2 \rfloor$ and $\lceil r/2 \rceil$ respectively, then clearly $M$ is both $I_5$-free and triangle-free.

In order for equality to hold, $\dim(F) = \lfloor r/2 \rfloor $ or $\dim(F) = \lceil r/2 \rceil$, each non-empty coset of $F$ contains precisely one element of $E$, and $|E_F| = 2^{r-\dim(F)-1}$. Moreover, by Lemma \ref{2tfreeness}, $M_F \cong \AG(r-\dim(F)-1,2)$ or $M_F$ is obtained by a sequence of doublings of $\PGS(1,t)$ for some $t$. 

\textbf{Case 1}: $M_F$ is obtained by a sequence of doublings of $\PGS(1,t)$.

Note that if $t =0,1$, then doublings of $\PGS(1,t)$ are affine geometries, which will be handled in Case 2, so suppose $t > 1$. In particular, this means that $M_F$ contains $\PGS(1,2)$ as an induced restriction, corresponding to four non-empty cosets $A_1$, $A_2$, $A_3$ and $A_4$ for which $A_1+A_2=A_4$. Fix the (unique) elements $x_i \in A_i \cap E$ for $i=1,2,3$. Since $M$ is triangle-free and $C_5$-free, it follows that the (unique) element $x_4 \in A_4 \cap E$ satisfies $x_4 = x_1+x_2+y$ for some $y \in F \cap E$ (if $y \in F \del E$, then $\{x_1,x_2, x_1+x_2+y, z,y+z\}$ is an induced $C_5$-restriction for any choice of $z \in F \cap E$). Pick $z_1, z_2 \in (F \cap E) \del \{y\}$. Then $\{x_1,x_2,x_3,z_1,z_2\}$ is an induced $I_5$-restriction, a contradiction. So Case 1 does not arise.

\textbf{Case 2}: $M_F \cong \AG(r-\dim(F)-1,2)$.

Take a basis (in terms of cosets) $A_1, \dots, A_{l}$ of $M_F$ where $l = r- \dim(F)$, and from each of these cosets, take its (unique) element $x_i \in E \cap A_i$. Let $F' = \cl(x_1,\dots,x_l)$.

We claim that $M \rvert F' \cong \AG(r-\dim(F)-1,2)$. First note that $M \rvert F'$ is triangle-free since $M_F$ is triangle-free. Now suppose for a contradiction that there exist $y_1,y_2,y_3 \in F' \cap E$ such that $y_1+y_2+y_3 \notin E$. Since $\dim(F) > 2$, we may pick two elements $z_1,z_2 \in E \cap F$ for which $y_1+y_2+y_3+z_1+z_2 \notin E$, but then $\{y_1,y_2,y_3,z_1,z_2\}$ is an induced $I_5$-restriction, a contradiction. So $M \rvert F'$ is $I_3$-free. By Lemma \ref{i3triangle}, it follows that $M \rvert F' \cong \AG(r-\dim(F)-1,2)$. 

Combining these conditions, we obtain that $ |E| =  2^{\lfloor r/2 \rfloor -1 } + 2^{\lceil r/2 \rceil -1 }$ if and only if $M$ is the direct sum of two affine geometries of dimension $\lfloor r/2 \rfloor$ and $\lceil r/2 \rceil$ respectively. 
\end{proof}

\section*{References}
\newcounter{refs}
\begin{list}{[\arabic{refs}]}
{\usecounter{refs}\setlength{\leftmargin}{10mm}\setlength{\itemsep}{0mm}}

\item\label{bb}
R. C. Bose, R. C. Burton, 
A characterization of flat spaces in a finite geometry and the uniqueness of the Hamming and the MacDonald codes, 
J. Combin. Theory 1 (1966), 96--104. 

\item\label{bkknp}
M. Bonamy, F. Kardo\v{s}, T. Kelly, P. Nelson, L. Postle,
The structure of binary matroids with no induced claw or Fano plane restriction,
Advances in Combinatorics, 2019:1,17 pp.

\item\label{gs}
P. Govaerts, L. Storme,
The classification of the smallest non-trivial blocking sets in $\PG(n,2)$,
J. Combin. Theory Ser. A 113 (2006), 1543-1548.

\item\label{nn1}
P. Nelson, K. Nomoto,
The structure of claw-free binary matroids.
arXiv:1807.11543 (2018).

\item\label{nn2}
P. Nelson, K. Nomoto,
The structure of $I_4$-free, triangle-free binary matroids.
arXiv:2005.00089 (2020)

\item\label{nnorin}
P. Nelson, S. Norin,
The smallest matroids with no large independent flat.
arXiv:1909.02045 (2019)

\item \label{oxley}
J. G. Oxley, 
Matroid Theory,
Oxford University Press, New York (2011).

\end{list}

\end{document}